\newtheorem{proposition}{Proposition}[section]
\newtheorem{corollary}[proposition]{Corollary}
\newtheorem{thm}[proposition]{Theorem}
\newtheorem{theorem}{Theorem}
\newtheorem{lemma}[proposition]{Lemma}
\newtheorem*{assumption}{(U) Uniform Constants}
\newtheorem{remark}[proposition]{Remark}
\newcommand{\ve}{\varepsilon}
\newcommand{\Si}{\mathcal{S}}
\newcommand{\C}{\mathcal{C}}
\newcommand{\ho}{\mathcal{H}}
\newcommand{\hofin}{\mathcal{H}_{\textrm{fin}}}
\newcommand{\Fb}{\mathcal{F}_b}
\newcommand{\G}{\mathcal{G}}
\newcommand{\I}{\mathcal{I}}
\newcommand{\nusrb}{\nu_{\mbox{\tiny SRB}}}
\newcommand{\Lp}{\mathcal{L}}
\newcommand{\pa}{\mathcal{P}}
\newcommand{\bDelta}{\overline{\Delta}}
\newcommand{\barF}{\overline{F}}
\newcommand{\dlj}{\Delta_{\ell,j}}
\newcommand{\pklj}{\mathcal{P}^k_{\ell,j}}
\newcommand{\tH}{\tilde{H}}
\newcommand{\B}{\mathcal{B}}
\newcommand{\ra}{\mathfrak{r}}
\newcommand{\tmu}{\tilde{\mu}}
\newcommand{\tnu}{\tilde{\nu}}
\newcommand{\F}{\mathring{F}}
\newcommand{\f}{\mathring{f}}
\newcommand{\vf}{\varphi}
\newcommand{\tvf}{\tilde{\varphi}}
\newcommand{\crs}{\mathcal{C}^r_s}
\newcommand{\cps}{\mathcal{C}^p_s}
\newcommand{\cqs}{\mathcal{C}^q_s}
\newcommand{\Lip}{\mbox{Lip}}
\newcommand{\loc}{\mbox{\scriptsize loc}}
\begin{document}

%\huge

\title{Behavior of the Escape Rate Function in Hyperbolic Dynamical Systems}

\author{Mark Demers\thanks{Department of Mathematics and Computer Science,
Fairfield University, Fairfield, USA.  Email: mdemers@fairfield.edu.  This research
is partially supported by NSF grant DMS-0801139.} \and
\and Paul Wright\thanks{Department of Mathematics, University of Maryland, College Park, USA.  Email: paulrite@math.umd.edu.}
}

\maketitle

\begin{abstract}

For a fixed initial reference measure, we study the dependence of the escape rate on the hole for  a smooth or piecewise smooth hyperbolic map.
First, we prove
the existence and H\"older continuity of the escape rate for systems with small holes admitting
Young towers.  Then we consider general holes for Anosov diffeomorphisms, without size or Markovian restrictions.  We prove bounds on the upper and lower escape rates using the
notion of pressure on the survivor set and show that a variational principle holds under generic
conditions.  However, we also show that the escape rate function forms a devil's staircase
with jumps along sequences of regular holes and
present examples to elucidate some of the difficulties involved in formulating a general theory.

\end{abstract}

Consider a map $f:M \circlearrowleft$ of a measure space $M$ in which
a set $H \subset M$ is identified as a {\em hole}.  We keep track of a point's orbit until
it enters $H$; once this happens, it disappears and is not allowed to return.

One of the first quantities of interest for such open systems is the rate of escape of mass from the system
after it is initially distributed  according to a fixed reference measure, such as Lebesgue measure.  More precisely, given a
measure $\mu$ on $M$, the (exponential)   {\em  escape rate} of $\mu$ is $-\rho$,   where
\begin{equation}
\label{eq:escape def}
\rho=\rho(H,\mu) = \lim_{n \to \infty} \frac{1}{n} \log \mu(\cap_{i = 0}^n f^{-i} (M\backslash H)),
\end{equation}
if this limit exists.  We write  $\underline{\rho}$ and $\overline{\rho}$ for the $\liminf$ and $\limsup$ of the right hand side of \eqref{eq:escape def}, respectively,  so that $-\underline{\rho}$ is the upper  escape rate, and
$-\overline{\rho}$ is the lower escape rate.

Note that $\rho\leq 0$, so that the escape rate is non-negative.  Also, although we usually suppress the parameters, $\rho$ depends on both the hole, $H$, and on the measure,
$\mu$.  In the present work, $\mu$ will always be either Lebesgue measure or the Sinai, Ruelle, Bowen (SRB) measure for $f$, and we study the behavior of the escape rate as a function of the hole.

Many works on systems with holes have focused on the existence of quasi-invariant measures with
physical properties by assuming either the existence of
a finite Markov partition \cite{pianigiani yorke, cencova1, collet ms1, chernov mark1} or
that the holes are quite small \cite{chernov mt1, liverani maume, chernov bedem, bdm, dwy}.
The derivative of the escape rate in the
zero-hole limit has also received attention recently  \cite{buni yur, kl escape, ferg polli}.

In this paper we consider \emph{both} small and large holes and make no Markovian
assumptions on the dynamics of the open systems.
Let $H_t$ be a 1-parameter family of holes varying continuously
with $t$. Let $\rho(t)=\rho(H_t,\mu)$, when defined.
In this context, we focus on two related questions.

\begin{enumerate}
  \item[1.]{\bf Is $t \mapsto \rho(t)$ continuous?}  If so, does it possess  a higher degree of regularity?\footnote{If the escape rate is not defined on an open interval of $t$, we can ask about the continuity of $\underline{\rho}$ and $\overline{\rho}$.}
\end{enumerate}

%One of the principal results of this work is that for many systems with some hyperbolicity and for large classes of sufficiently small holes, $\rho(t)$ exists and is H\"older continuous.

\begin{enumerate}
  \item[2.]  {\bf
What is the overall structure of the escape rate function?}  Are there qualitative
differences in the escape rate function as we move out of the small hole
regime?
\end{enumerate}

The current understanding of the large hole regime is poor compared to the understanding of the small hole regime.  This is because, for small holes, we may consider the open system as a perturbation of the closed system, where no mass escapes.  In particular, perturbative spectral arguments have proven useful in studying this regime for certain systems, see for example \cite{liverani maume, demers liverani, keller liverani, dwy}.   In this paper, we extend such developments to
prove the existence and H\"older continuity of the escape rate for systems with small holes admitting
Young towers.  Young towers are often used to study hyperbolic systems where a clear spectral picture is unavailable for the original map but is available for a tower extension.  In order to prove our results,  we construct a perturbative framework in the tower setting and then show that the results proved for the tower map can be pushed back to the original system.
For this, we use the norms for the hyperbolic transfer operator on the tower introduced in
\cite{demers norms}.

A second motivation of the present work is to make further headway in the study of the large hole regime, without making overly restrictive assumptions on the hole, such as that it be an element of a finite Markov partition for $f$.  In order to do this, we  obtain bounds on the escape rate using a variational principle.  Such an approach for studying escape rates in uniformly hyperbolic systems was first introduced by Bowen \cite{bowen book} and
Young \cite{young escape}
and was recently developed for systems lacking uniform hyperbolicity \cite{dwy2}.

Since the general situation for systems with larger holes is unclear,  we  restrict our attention here to Anosov diffeomorphisms.  We prove general bounds on the upper and lower escape rates using the
notion of pressure on the survivor set and show that a variational principle holds under generic
conditions; however, we also show that the escape rate function can form a devil's staircase,
with jump discontinuities, for sequences of regular holes and
present examples to elucidate some of the difficulties involved in formulating a general theory.

\subsubsection*{Acknowledgments}

The authors would like to thank Lai-Sang Young for both starting and
encouraging them on this project.

%%%%%%%%%%%%%%%%%%%%%%%%%%%%%%%%%%%%%%%%

\section{Statement of Results}
\label{results}

All of the results in this work concern a map $f:M\circlearrowleft,$  a $C^2$ or piecewise
$C^2$ map of a
Riemannian manifold.  Throughout, $\mu$ denotes Lebesgue measure on $M$,
which we assume is finite\footnote{When $f$ is only piecewise $C^2$, it is sometimes convenient to consider $M$ to be a countable union of disjoint components, and so our assumptions do not exclude this possibility.}.

We consider holes $H\subset M$ that are open.
The set of all such holes  in $M$ is denoted by $\ho$.
With $H \in \ho$ fixed, we define $M^n =  \cap_{i = 0}^n f^{-i} (M\backslash H)$ and $\f^n = f^n|_{M^n}$, for $n \geq 1$, to represent the dynamics of $f$
restricted to the set of points that have not escaped by time $n$.
Note that once $H$ is introduced, its boundary $\partial H$ must give rise to singularities for $\f$, even if $f$ is smooth.
To this end, if $\Si$ denotes the singularity
set for $f$ (which could be empty), we consider the enlarged singularity set
given by $\Si_H = \Si \cup \partial H $.

\bigskip
\noindent {\bf A. Systems Admitting Young Towers}

\medskip

Our first result addresses the regularity of the escape rate for small holes
when the system admits a Young tower as introduced in
\cite{young tower}.  We define these towers in detail in Section~\ref{tower}, but we will introduce them now, as certain technical aspects of these towers are necessary for precisely stating our result.

In brief, the tower map $F:\Delta\circlearrowleft$ is a type of countable Markov extension of $f:M\circlearrowleft$
built from a return time function $R$ defined on a hyperbolic set $\Lambda\subset M$.  Returns are only defined when the returning set has a hyperbolic Markov structure.

The decay rate of the quantity
$\mu(x \in \Lambda : R(x)>n)$  is
crucial to determining
the statistical properties of $f$.  All of the  towers we consider have {\em exponential tails}, i.e.~there exist constants $C>0$, $\theta <1$
such that $\mu(R>n) \leq C\theta^n$ for all $n \geq 0$.  In this case,
under certain mixing assumptions on $f$, it may be possible to construct the tower  so that a spectral gap exists for the transfer operator $\Lp_F$ associated with
$F$ acting on a certain
space of distributions.  By this, we mean that   1 is a simple eigenvalue for $\Lp_F$, and the rest of the spectrum is contained in a disk of radius $r<1$.  In this situation, an SRB measure $\nusrb$ for $f$ can be constructed on $M$ and  many strong statistical properties for $(f,\nusrb )$ can be derived (see \cite{young tower, demers norms}).

Let
$\pi: \Delta \to M$ denote the canonical projection satisfying
$f \circ \pi = \pi \circ F$.
We say a tower $(F, \Delta)$ {\em respects the hole} $H$ if the following conditions
are satisfied:
\begin{enumerate}  \vspace{-6 pt}
	\item[{\bf (H.1)}]  $\pi^{-1} H$ is the union of countably many elements in
	the Markov partition for $F$.  \vspace{-6 pt}
	\item[{\bf (H.2)}]   The set  $\Lambda$ from which the base of the tower  is constructed consists of points
	$x \in M\setminus H$ that approach $\Si_H$  slower than a fixed exponential rate, i.e.~there are constants $\delta > 0$, $\xi_1 > 1$, such that for all $n\geq 0$ and all $x\in\Lambda$,  $d(f^nx, \Si_H) \geq \delta \xi_1^{-n}$.   \vspace{-6 pt}
\end{enumerate}	

\noindent The notion of a tower respecting a hole has been used in a variety of settings
starting with \cite{demers tower hole}; for hyperbolic systems, the condition
{\bf (H.2)} was introduced in \cite{dwy}.  In applications, {\bf (H.1)} is ensured in part by adjoining the boundary of the hole to the singularity set, resulting in $\Si_H$.    Because this enlarged singularity set can cause unbounded changes in the return times to $\Lambda$,  towers that respect holes must be constructed separately, even when towers have previously been constructed for the system without consideration of the hole.

Once the hole has
been introduced, we can ask if the spectral gap of $\Lp_F$ persists for the transfer operator
$\Lp_{\F}$ corresponding to the tower map with hole, $\F$.
By the persistence of the  spectral gap for $\Lp_{\F}$, we mean
that $\Lp_{\F}$ has a simple real eigenvalue $\ra \leq 1$, and the rest of the spectrum
of $\Lp_{\F}$ is still contained in a disk of radius $r$, with $r < \ra$.

The existence of such a spectral gap implies that the escape rate  equals $-\log \ra$
for a large class of initial measures on $M$.  In addition, there is a unique
physical quasi-invariant probability measure $\mu_H$ that is the analog of the physical SRB measure for the system $f$ without holes.   This quasi-invariant measure satisfies $\f_* \mu_H = \ra \mu_H$, and there is a large class of initial measures on $M$ that limit on $\mu_H$ if they are pushed forward under the action of $\f$ and then renormalized back into probability measures (see \cite{demers young}
for a survey of  these techniques).

We call $H_0$ an {\em infinitesimal hole} if $\mu(H_0) = 0$ and $H_0$
can be realized as a limit of holes $H \in \ho$ in the topology
induced by the Hausdorff metric.

Let $\{ H_t \}_{t \in I} \subset \ho$, where $I$ is some interval, be a family
of holes. We call the family
{\em well-parametrized} if
\begin{enumerate}  \vspace{-4 pt}
	\item[(i)]   $0 \in I$ and $H_0$ is an infinitesimal hole;  \vspace{-6 pt}
	\item[(ii)]  dist$(\partial H_t, \partial H_{t'}) \leq |t - t'|$,
	where distance is measured in the Hausdorff metric.\vspace{-4 pt}
\end{enumerate}

Suppose that $f$ has a unique invariant SRB measure $\nusrb$.
In Section~\ref{proof of thm continuity} we prove the following theorem.

\begin{theorem} {\bf (Continuity of the Escape Rate for Small Holes)}
\label{thm:continuity}
 Let $\{ H_t \}_{t \in I}$ be
a well-parametrized sequence of holes in $\ho$.
Suppose that for each $t\in I$, $(f,M)$ admits a tower respecting $H_t$.  If for some
$\ve >0$, the towers satisfy the uniformity conditions {\bf(U)} of
Section~\ref{proof of thm continuity} for all $t \in [0,\ve]$, then
there exists $0 < \delta \le \ve$ such that
$\rho(t)=\rho (H_t, \nusrb)$ exists for each $t \in [0,\delta],$ and
$t \mapsto \rho(t) $ is a H\"older continuous function.
In addition, for $t \in [0, \delta]$ a unique physical quasi-invariant measure $\mu_{H_t}$ exists,
and these measures vary H\"older continuously
with $t$.
\end{theorem}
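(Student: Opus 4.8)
The plan is to lift the whole family to the Young towers and run a Keller--Liverani perturbation argument with $t$ as the perturbation parameter, using the closed system at $t=0$ as the unperturbed reference (this is legitimate because $H_0$ is infinitesimal, so $\mu(H_0)=0$ and $\Lp_{\F_0}$ acts identically to the closed-system operator, with leading eigenvalue $\ra_0 = 1$). For each $t\in[0,\ve]$ let $F_t:\Delta_t\circlearrowleft$ be the tower respecting $H_t$, let $\F_t$ be the associated open tower map, and let $\Lp_{\F_t}$ be its transfer operator on the anisotropic Banach space $\B_t$ of \cite{demers norms}, with strong norm $\|\cdot\|_s$ and weak norm $\|\cdot\|_w$. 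For each fixed $t$, the results of \cite{dwy} already give a spectral gap: a simple real leading eigenvalue $\ra_t\le 1$, the rest of the spectrum inside a disk of radius $r<\ra_t$. What must be upgraded is that the gap persists \emph{uniformly} in $t$ and that the leading eigenvalue, conditionally invariant density, and eigenmeasure depend H\"older continuously on $t$; this is then transported back to $M$ through the projections $\pi_t$.

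\textbf{Step 1 (uniform functional-analytic arena).} Using the uniformity hypotheses {\bf (U)} --- uniform exponential tails $\mu(R_t>n)\le C\theta^n$, uniform hyperbolicity, distortion, and transversality constants --- I would first arrange that the operators $\Lp_{\F_t}$ can all be compared on a common scale, either by embedding each $\B_t$ continuously into a single Banach space with constants independent of $t$, or equivalently by fixing a common reference geometry on the tower levels. The key output is a \emph{uniform} Lasota--Yorke inequality $\|\Lp_{\F_t}^n h\|_s\le C\sigma^n\|h\|_s+C_n\|h\|_w$ with $\sigma<1$ and $C,C_n$ independent of $t\in[0,\ve]$, together with uniform compactness of the $\|\cdot\|_s$-unit ball in $\|\cdot\|_w$; these are the uniform tower analogues of the estimates in \cite{demers norms, dwy}. \textbf{Step 2 (continuity in the mixed norm).} The heart of the perturbative step is a bound $\|\Lp_{\F_t}-\Lp_{\F_{t'}}\|_{s\to w}\le C|t-t'|^\gamma$ for some $\gamma>0$. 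Here I would exploit that the family is well-parametrized: condition (ii) gives $\mathrm{dist}(\partial H_t,\partial H_{t'})\le|t-t'|$ in the Hausdorff metric, so $H_t\triangle H_{t'}$, and hence the surviving sets at each tower level, differ in a set of measure controlled by a power of $|t-t'|$; pairing this against a function with bounded $\|\cdot\|_w$, which only tests against sufficiently regular observables integrated along unstable leaves, yields the estimate, the exponent $\gamma$ arising from interpolating the Hausdorff displacement against the H\"older exponent built into the weak norm. This step must also absorb the fact that the tower \emph{combinatorics} (return times, Markov partition) change with $t$; condition {\bf (H.2)}, keeping $\Lambda$ away from $\Si_{H_t}$ at a fixed exponential rate, is exactly what prevents this change from being uncontrolled.

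\textbf{Step 3 (perturbation theorem and transfer to $M$).} With Steps 1 and 2 in place, the abstract result of Keller and Liverani \cite{keller liverani} applies: there is $0<\delta\le\ve$ such that for every $t\in[0,\delta]$ the operator $\Lp_{\F_t}$ has a simple leading eigenvalue $\ra_t$ with the remaining spectrum in a disk of radius $r<\ra_t$, and $t\mapsto\ra_t$ together with the leading spectral projector $\Pi_t$ (hence the conditionally invariant density and the eigenmeasure) are H\"older continuous on $[0,\delta]$, with $\ra_0=1$; the resulting H\"older exponent for $\ra_t$ is a function of $\gamma$ and of the size of the spectral gap. To read off the escape rate on $M$, note that $\pi_t^{-1}M^n=\Delta_t^n$ for all $n$, by {\bf (H.1)} and $f\circ\pi_t=\pi_t\circ F_t$, and that the lift $\tnusrb$ of $\nusrb$ lies in $\B_t$ with strictly positive component along the (positive) leading eigenfunction; since $\int_{\Delta_t}\Lp_{\F_t}^n\tilde h\,dm = \tnusrb(\Delta_t^n) = \nusrb(M^n)$ with $\tilde h$ the density of $\tnusrb$, the spectral gap gives $\nusrb(M^n)\asymp\ra_t^n$, so $\rho(t)=-\log\ra_t$ exists and is H\"older. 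Finally $\mu_{H_t}$ is the normalized push-forward under $\pi_t$ of the leading eigenmeasure of $\Lp_{\F_t}$, and its H\"older dependence on $t$, tested against H\"older observables on $M$, follows from H\"older continuity of $\Pi_t$ in $\|\cdot\|_{s\to w}$ together with the uniform regularity of the projections $\pi_t$ supplied by {\bf (U)}.

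I expect the crux to be Steps 1--2: assembling a single functional-analytic arena in which the operators $\Lp_{\F_t}$ for different $t$ genuinely coexist --- nontrivial because the towers $\Delta_t$ themselves vary, and return-time functions can jump by large amounts when $\partial H_t$ sweeps across itself --- and then establishing the mixed-norm continuity estimate across that shifting combinatorial structure. The hypotheses {\bf (U)} and condition (ii) of well-parametrization are precisely the inputs that make this tractable; once those perturbative estimates are secured, invoking \cite{keller liverani} and pushing the spectral data back to $M$ are comparatively routine.
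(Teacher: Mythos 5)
Your outline has the right skeleton — lift to Young towers, establish a Lasota--Yorke inequality and a mixed-norm closeness bound $\|\Lp-\Lp'\|_{\B\to\B_w}$, invoke Keller--Liverani, project back to $M$ — but it has a genuine gap precisely at the step you yourself flag as the crux: putting the operators $\Lp_{\F_t}$ for different $t$ into a single functional-analytic arena. You say you would ``embed each $\B_t$ continuously into a single Banach space with constants independent of $t$, or equivalently fix a common reference geometry on the tower levels,'' but this is not a technical chore to be smoothed over later; it is \emph{the} idea of the proof, and the paper does not do it the way you suggest. The Markov partitions and return-time functions of $(F_t,\Delta_t)$ and $(F_{t'},\Delta_{t'})$ are genuinely incompatible (an element $\Delta_{\ell,j}$ of one need not be a union of elements of the other), so there is no natural common Banach space on which $\Lp_{\F_t}$ and $\Lp_{\F_{t'}}$, built over their own separate towers, both act, and trying to compare them across an abstract embedding is exactly where a naive version of your Step 2 would break down.

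The mechanism the paper actually uses is baked into condition {\bf (U)}(a)(i): the uniformity hypothesis is stated \emph{for pairs} $\sigma=(H_1,H_2)$, and asserts that for each such pair $f$ admits a \emph{single} generalized horseshoe $(\Lambda^{(\sigma)},R^{(\sigma)})$ respecting \emph{both} holes simultaneously. This produces one tower $(F,\Delta)$ and one Banach space $\B$ on which the closed operator $\Lp$ and the two open operators $\Lp_1,\Lp_2$ all act, and the lifted holes $\tilde H_1,\tilde H_2$ are both countable unions of cells $\Delta_{\ell,j}$ in the \emph{same} Markov partition. The mixed-norm estimate (Lemma~\ref{lem:close L}) is then elementary: $1_{\Delta^1(\tilde H_1)}$ and $1_{\Delta^1(\tilde H_2)}$ agree on all levels $\ell\le n(\Lambda,R;H_1,H_2)$, and the exponential return-time tails from {\bf (U)}(b)(ii) make the contribution from higher levels decay like $(\beta^{-1}\beta_0)^{n(\Lambda,R;H_1,H_2)}$. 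Condition {\bf (H.2)} is then used to convert this into a power of $|t_1-t_2|$, via $n(\Lambda,R;H_1,H_2)\ge -\log(|t_1-t_2|/\delta)/\log\xi_1$. Your appeal to {\bf (H.2)} as controlling how ``the tower combinatorics change with $t$'' is not quite the role it plays: on the joint tower the combinatorics are fixed, and {\bf (H.2)} instead governs the level at which the two lifted holes first disagree. A related small inaccuracy: the paper does not privilege $t=0$ as the unperturbed reference; each pair $(H_{t_1},H_{t_2})$ comes with its own joint tower and closed operator $\Lp$, and H\"older continuity is obtained by comparing arbitrary nearby pairs, with the spectral gap for $\Lp$ supplied by Theorem~\ref{thm:spectral} and {\bf (U)}(a)(ii). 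Your Step 3 (identifying the escape rate on $M$ with $-\log\ra_t$ and pushing the eigenmeasure forward via $\pi$) is essentially what the paper does, though the paper invokes results of \cite{dwy} for the identification rather than rederiving it.
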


\noindent
The measures varying H\"older continuously means that there exist $C, \alpha >0$ such that,
\[
|\mu_{H_t}(\vf) - \mu_{H_{t'}}(\vf) | \le C |t-t'|^\alpha |\vf|_{C^0(M)} \; \; \; \mbox{ for all } \vf \in \C^0(M) .
\]

The basic idea of our proof is as follows:  Given $H_t$ and $H_{t'}$ with $|t-t'|$ small, we construct a tower over $f:M\circlearrowleft$ that respects {\em both} $H_t$ and $ H_{t'}$. On the tower, the lifted holes $\pi^{-1}H_t$ and $\pi^{-1}H_{t'}$ are close together, and so the spectral picture changes little when we consider one open transfer operator versus the other.   To
make this precise, we use the perturbative framework of \cite{keller liverani} and the
hyperbolic norms introduced in \cite{demers norms}.
Our use of these hyperbolic norms on $\Delta$
simplifies the argument greatly and allows us to avoid working with a second induced
object, the quotient tower $\bDelta$ (see Remark~\ref{rem:quotient}).

The analogue of Theorem~\ref{thm:continuity} has been shown to hold in \cite{demers liverani} for
Anosov diffeomorphisms and some piecewise hyperbolic systems with bounded derivative in two dimensions
using a different approach.

\medskip
\noindent
{\bf An application to the 2D periodic Lorentz gas.}
Let $f:M \circlearrowleft$ be the billiard map associated with a two-dimensional periodic
Lorentz gas with finite
horizon whose scatterers are bounded by $\C^3$ curves with strictly positive curvature.
In \cite{dwy}, holes are introduced into $M$ that are derived from two types of holes
in the billiard table $X$:  An open segment on the boundary of one of the scatterers or an open convex set in $X$ whose closure is disjoint from
any of the scatterers.  If $\sigma \subset X$ is one such hole, it induces a hole $H_\sigma \subset M$
which is labeled as a hole of Type I or Type II respectively.  For a detailed description
of the geometry of these holes in $M$, see \cite[Section 3.1]{dwy}.
The Young towers for this class of maps constructed in \cite{dwy} satisfy the assumptions of
Theorem~\ref{thm:continuity}, yielding the following corollary.

\begin{corollary}
Suppose $f$ is the billiard map described above and let $\{ H_t \}_{t \in I}$ be a
well-parametrized sequence of holes of Type I or Type II.  For $t$ sufficiently small,
both the escape rate $\rho(t)$ and the physical quasi-invariant measures $\mu_{H_t}$
vary H\"older continuously with $t$.
\end{corollary}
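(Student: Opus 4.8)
The plan is to check that the towers built in \cite{dwy} for this family of holes fulfil every hypothesis of Theorem~\ref{thm:continuity} and then quote that theorem. The ambient requirements belong to the billiard literature: a finite-horizon periodic Lorentz gas whose scatterers are bounded by $\C^3$ curves of strictly positive curvature gives a piecewise $\C^2$ hyperbolic map $f:M\circlearrowleft$ which, by the classical theory of dispersing billiards, carries a unique invariant SRB measure $\nusrb$; finiteness of the horizon is what makes the associated Young tower have exponential tails. Moreover, for each hole $H_t$ of Type I or Type II, \cite{dwy} constructs a tower $(F,\Delta)$ respecting $H_t$ in the sense of {\bf (H.1)}--{\bf (H.2)}, with the enlarged singularity set $\Si_{H_t}=\Si\cup\partial H_t$ and the slow-recurrence bound of {\bf (H.2)} built into the choice of the base $\Lambda$.

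What remains is to verify the uniformity conditions {\bf (U)} of Section~\ref{proof of thm continuity} on some interval $[0,\ve]$. The key observation is that the construction in \cite{dwy} depends on the hole only through finitely many geometric data: the location and diameter of $H_t$ in $M$, the number, lengths, slopes and curvatures of the singularity curves making up $\partial H_t$, the distance from $\partial H_t$ to the fixed scatterer-induced singularity set $\Si$, and the distance from the reference homogeneous unstable curve used to seed $\Lambda$ to $H_t$ and to $\Si_{H_t}$ (for the geometry of these holes see \cite[Section 3.1]{dwy}). For a well-parametrized family, condition~(ii) gives $\mathrm{dist}(\partial H_t,\partial H_{t'})\le|t-t'|$ in the Hausdorff metric, so each of these quantities varies continuously --- indeed Lipschitz or H\"older --- with $t$, while the restriction to holes of the two prescribed types keeps the slope and curvature bounds on the pieces of $\partial H_t$ controlled from the outset. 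Consequently the tail constants $C,\theta$, the hyperbolicity and bounded-distortion constants, the constants $\delta,\xi_1$ of {\bf (H.2)}, and the threshold on $t$ below which the spectral gap of $\Lp_{\F}$ persists can all be chosen uniformly for $t$ in a fixed interval $[0,\ve]$, which is precisely~{\bf (U)}.

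With {\bf (U)} in hand, Theorem~\ref{thm:continuity} applies verbatim and produces $0<\delta\le\ve$ such that $\rho(t)$ exists and is H\"older continuous on $[0,\delta]$, and such that the physical quasi-invariant measures $\mu_{H_t}$ exist and depend H\"older continuously on $t$ there; this is the corollary. I expect the real work to lie in the middle step --- tracing the construction of \cite{dwy} to confirm that the hole enters only via quantities a well-parametrized family holds under uniform control. The hole intervenes in several places at once: it changes the return times defining the tower, enlarges the singularity set, and creates new forbidden regions $\Lambda$ must avoid, so one must rule out that a small change of $H_t$ makes return times blow up, distortion degenerate, or the usable part of the reference curve shrink away. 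The mechanism preventing this is that the finite-horizon and uniform-positive-curvature hypotheses fix the hyperbolicity and growth rates of $f$ independently of the hole, while the Hausdorff--Lipschitz control on $\partial H_t$ keeps the hole-dependent ingredients from drifting; no genuinely new phenomenon appears, so this step is bookkeeping through \cite{dwy} rather than new mathematics.
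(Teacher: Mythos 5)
Your proposal is correct and follows the paper's own route: the corollary is proved by citing \cite[Proposition 2.2 and Section 3.1]{dwy} for the verification of the uniformity conditions {\bf (U)} and then applying Theorem~\ref{thm:continuity} verbatim, exactly as you outline. One small imprecision worth noting: you list ``the threshold on $t$ below which the spectral gap of $\Lp_{\F}$ persists'' as one of the quantities whose uniformity constitutes {\bf (U)}, but the persistence of the spectral gap is a \emph{conclusion} derived from {\bf (U)} via the perturbative argument of Keller--Liverani inside the proof of Theorem~\ref{thm:continuity}, not one of the hypotheses; and {\bf (U)}(a) is specifically about constructing a single horseshoe respecting an arbitrary \emph{pair} of holes $(H_1,H_2)\in\mathbb{H}_\ve\times\mathbb{H}_\ve$, a point your sketch (phrased mostly in terms of individual holes $H_t$) glosses over, though it is precisely what \cite{dwy} supplies and what the paper's remark singles out via the transversality of $\partial H_t$ to the stable and unstable cones.
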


%%%%%%%%%%%%%%%%%%%%%%%%%%%%%%%%%%%

\vskip .2in
\noindent {\bf B. Large and Small Holes for Anosov Diffeomorphisms}

\medskip
Our next result concerns the general behavior of the escape rate as a function of both small and large holes.  Because the general picture for large holes is unclear, we limit our attention to Anosov diffeomorphisms.
Since we cannot rely on perturbative spectral arguments, we
use a variational principle to obtain
bounds on the escape rate.   Such an approach for investigating escape rates was first used in \cite{bowen book}; see \cite{dwy2} for a summary of further developments in this direction.

Let $M$ be a compact Riemannian manifold and let $f:M \circlearrowleft$ be a $C^{1+\epsilon}$  Anosov diffeomorphism.
Note that, unlike in Section {\bf A} above, for any $H \in \ho$,
$\Si_H = \partial H$ is now automatically compact, since $ M$ is.
As before, $\mu$ denotes Lebesgue measure on $M$.
The {\em survivor set}, $\Omega = \Omega(H) = \cap_{n \in \mathbb{Z}} f^n(M \setminus H)$, is the set of points whose forward and backward orbits never enter $H$.  We do not place any {\em a priori} assumptions on the mixing properties of $f$.

Given an $f$-invariant measure on $M$, we define
\[
P_\nu = h_\nu(f) - \int \chi^+ \, d\nu ,
\]
where $h_\nu(f)$ is the metric entropy of $\nu$, and $\chi^+$ is the
sum of positive Lyapunov exponents of $f$, counted with multiplicity.  In \cite{chernov mark1, chernov mark2}, the authors prove
that for holes that are elements of a finite Markov partition, an
{\em escape rate formula} holds, i.e.~there exists an ergodic, $f$-invariant measure $\nu$
such that $\rho(\mu) = P_\nu$.  These results were extended to small, non-Markov holes
by approximation in \cite{chernov mt2}.

Note that any $\f$-invariant measure must have its support contained inside $\Omega$, and so we define $\I(\Omega)$ to be the set of ergodic, $f$-invariant measures whose support is contained in $\Omega$.

Given a class of $f$-invariant measures $\mathcal{C}$ on $\Omega$, we define the
{\em pressure} on $\Omega$ to be
$\pa_{\mathcal{C}} = \sup_{\nu \in \mathcal{C}} P_\nu$.  We say that $f$ satisfies a {\em full variational principle} if
$\rho(\mu) = \pa_{\mathcal{C}}$, for an appropriate class of measures $\mathcal{C}$.  Ideally, the class should be as large as possible. Let $N_\ve(A)$ denote the $\ve$-neighborhood
of a set $A$.
Following \cite[Section 2.I]{dwy2}, we define
\[
\begin{split}
\G(\Omega) = \{ & \nu \in \I(\Omega) : \mbox{The following holds for $\nu$-a.e.\ $x$: given $\gamma > 0$, $\exists r = r(x,\gamma) > 0$ } \\
& \mbox{ such that $N_{re^{-\gamma i}}(f^ix) \subset M \setminus H$ for all $i \ge 0$} \} .
\end{split}
\]
A standard Borel-Cantelli argument shows that if
$\nu \in \I(\Omega)$ has the property that for some $C, \alpha >0$,
$\nu(N_\ve(\partial H)) \leq C\ve^\alpha$ for all
$\ve>0$, then $\nu \in \G(\Omega)$.

A full variational principle  and an escape rate formula are proved for hyperbolic systems
admitting Young towers respecting small holes in \cite{dwy2}, with $\G(\Omega)$  as the class of measures over which the supremum is taken.
In this section, we take a more general approach and study the relationship between
escape
rate and pressure without placing restrictions on the size or placement of the hole.
All results stated in this section are proved in Section~\ref{anosov}.

\begin{theorem}
\label{thm:anosov}
Let $f : M \circlearrowleft$ be a $C^{1+\ve}$ Anosov diffeomorphism with hole $H \in \ho$.  Then
$$
\pa_{\G}  \ \le \ \underline \rho (H,\mu)\ \le \ \overline \rho (H,\mu)\  \le \ \pa_{\I} \ .
$$
If in addition $\partial H \cap \Omega (H)=\emptyset$, then $\rho (H,\mu)$ is well-defined and equals $\pa_{\G}=\pa_{\I}$.
\end{theorem}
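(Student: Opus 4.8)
\textbf{The plan} is to prove the outer chain $\pa_\G\le\underline\rho\le\overline\rho\le\pa_\I$ by two independent arguments --- a variational upper bound and a ``counting surviving tubes'' lower bound --- and then to obtain the last assertion by a short compactness argument. Throughout put $\chi^+(x)=\log|\det(Df|_{E^u(x)})|$, a continuous function on $M$ (since $f$ is $C^{1+\ve}$ Anosov, $E^u$ is H\"older) whose $\nu$-integral equals the sum of the positive Lyapunov exponents of $\nu$; write $S_ng=\sum_{i=0}^{n-1}g\circ f^i$, and let $\Lambda>1$ be a uniform expansion constant for $E^u$, $d_u=\dim E^u$, $d_s=\dim E^s$, $d=\dim M$. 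For $\ve_0>0$ small, uniform transversality of $E^u,E^s$ and bounded distortion of the unstable Jacobian along unstable manifolds give the standard estimate $\mu(B_n(x,\ve_0))\le C_{\ve_0}\,e^{-S_n\chi^+(x)}$ for the Bowen ball $B_n(x,\ve_0)=\{y:d(f^ix,f^iy)<\ve_0,\ 0\le i\le n\}$. Taking $E_n$ a maximal $(n,\ve_0)$-separated subset of $M^n$, maximality gives $M^n\subset\bigcup_{x\in E_n}B_n(x,\ve_0)$, hence $\mu(M^n)\le C_{\ve_0}\sum_{x\in E_n}e^{-S_n\chi^+(x)}=:C_{\ve_0}Z_n$.

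\textbf{Upper bound.} I would run Misiurewicz's proof of the variational principle with potential $-\chi^+$ applied to these particular separated sets: form $\sigma_n=Z_n^{-1}\sum_{x\in E_n}e^{-S_n\chi^+(x)}\delta_x$ and $\nu_n=\frac1n\sum_{i=0}^{n-1}f^i_*\sigma_n$, and pass to a subsequence along which $\frac1n\log Z_n\to\limsup$ and $\nu_n\to\nu$ weak-$*$; the construction produces an $f$-invariant $\nu$ with $h_\nu(f)-\int\chi^+\,d\nu\ge\limsup_n\frac1n\log Z_n\ge\overline\rho$. The extra point, beyond the classical argument, is the location of $\nu$: since $\mathrm{supp}\,\sigma_n\subset M^n$ one has $f^i(E_n)\subset M\setminus H$ for $0\le i\le n$, so $\mathrm{supp}\,\nu_n\subset M\setminus H$; as $H$ is open this forces $\nu(H)=0$, hence $\mathrm{supp}\,\nu\subset M\setminus H$, and then invariance of $\mathrm{supp}\,\nu$ under the diffeomorphism $f$ gives $\mathrm{supp}\,\nu\subset\bigcap_{k\in\mathbb{Z}}f^k(M\setminus H)=\Omega$. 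Decomposing $\nu$ into ergodic components (the pressure functional is affine under this decomposition) yields an element of $\I(\Omega)$ with pressure $\ge\overline\rho$, so $\overline\rho\le\pa_\I$.

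\textbf{Lower bound.} Fix an ergodic $\nu\in\G(\Omega)$ and small $\gamma,\eta>0$, and set $\lambda=\int\chi^+\,d\nu$. Combining the defining property of $\G(\Omega)$ (made uniform via Egorov), Birkhoff's theorem for $\chi^+$, the uniform bounded-distortion estimate along unstable manifolds, and the Brin--Katok local entropy formula, one obtains $r_0\in(0,\ve_0/2)$ and, for all large $n$, an $(n,\ve_0)$-separated set $E_n$ with $\#E_n\ge e^{n(h_\nu(f)-\eta)}$ such that every $x\in E_n$ satisfies $N_{r_0e^{-\gamma i}}(f^ix)\subset M\setminus H$ for all $i\ge0$, $S_n\chi^+(x)\le n(\lambda+\eta)$, and the stated distortion bound. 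For each such $x$, let $D_n(x)\subset W^u_{\mathrm{loc}}(f^nx)$ be the disk of radius $c_0r_0e^{-\gamma n}$ about $f^nx$ (with $c_0$ a small constant depending only on the hyperbolicity data), put $S_n(x)=f^{-n}(D_n(x))\subset W^u_{\mathrm{loc}}(x)$, and let $U_n(x)$ be the union of the stable disks of radius $c_0r_0e^{-\gamma n}$ centred on $S_n(x)$. Since $f^{-1}$ contracts $E^u$ by at most $\Lambda^{-1}$ and $f$ contracts $E^s$, a direct computation shows $f^i(U_n(x))\subset N_{r_0e^{-\gamma i}}(f^ix)\subset M\setminus H$ for $0\le i\le n$, so $U_n(x)\subset M^n$; transversality and bounded distortion give $\mu(U_n(x))\ge c_1e^{-\gamma dn}e^{-S_n\chi^+(x)}\ge c_1e^{-n(\gamma d+\lambda+\eta)}$; and $(n,\ve_0)$-separation with $r_0<\ve_0/2$ makes the $U_n(x)$, $x\in E_n$, pairwise disjoint. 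Summing, $\mu(M^n)\ge\#E_n\cdot c_1e^{-n(\gamma d+\lambda+\eta)}$, so $\underline\rho\ge h_\nu(f)-\lambda-\eta-\gamma d$; letting $\eta\to0$ and then $\gamma\to0$ gives $\underline\rho\ge P_\nu$, and taking the supremum over $\nu$ yields $\pa_\G\le\underline\rho$.

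\textbf{The case $\partial H\cap\Omega=\emptyset$, and the main difficulty.} Here $\Omega$ and $\partial H$ are both compact; since $\Omega\subset M\setminus H$ and $\Omega\cap\partial H=\emptyset$ we get $\Omega\cap\overline{H}=\emptyset$, hence $d_0:=d(\Omega,\overline{H})>0$. For any $x\in\Omega$ the whole forward orbit lies in $\Omega$, so $N_{d_0}(f^ix)\subset M\setminus H$ for every $i\ge0$; taking $r=d_0$ (any $\gamma$) shows that \emph{every} $\nu\in\I(\Omega)$ already lies in $\G(\Omega)$, whence $\pa_\I\le\pa_\G$. Together with the trivial $\pa_\G\le\pa_\I$ and the two bounds above, the chain $\pa_\G\le\underline\rho\le\overline\rho\le\pa_\I$ then forces all four quantities to coincide, so the limit defining $\rho(H,\mu)$ exists and equals $\pa_\G=\pa_\I$. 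I expect the lower bound to be the delicate step: one must balance the shrinking rate $r_0e^{-\gamma i}$ supplied by $\G(\Omega)$ against the hyperbolic rates over the \emph{entire} window $0\le i\le n$ --- the binding constraint occurs near $i=n$, which is exactly why $D_n(x)$ is taken of radius $\sim e^{-\gamma n}$ rather than a fixed size --- and carry out the distortion and transversality bookkeeping precisely enough that each tube still has measure $\gtrsim e^{-n(\lambda+o(1))}$, at the price of only the harmless factor $e^{-\gamma dn}$. The upper bound, by contrast, is a routine adaptation of Misiurewicz's argument, the one new observation being that the limiting measure is automatically carried by $\Omega$.
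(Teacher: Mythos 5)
Your proof is correct, but it takes a genuinely different route from the paper on both outer inequalities, so it is worth comparing.

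For $\overline\rho\le\pa_\I$, the paper does \emph{not} run a Misiurewicz-type argument on separated subsets of $M^n$. Instead it approximates $H$ from inside by an increasing sequence of Markov holes $H_n\subset H$ built from fine Markov partitions, invokes the Chernov--Markarian escape-rate formula to produce $\nu_n\in\I(\Omega_n)$ with $P_{\nu_n}=\rho_n\ge\overline\rho$, and then passes to a weak-$*$ limit $\nu$, using continuity of $\log|\det Df|_{E^u}|$ and upper semicontinuity of entropy (via expansiveness and a fixed finite partition $\xi$ of small diameter) before applying the ergodic decomposition. Your approach replaces the citation to Chernov--Markarian and the Markov approximation with a direct variational-principle argument on the surviving set, with the one extra observation (which you make) that the limiting Misiurewicz measure is automatically carried by $\Omega$ because each $\nu_n$ is supported on the closed set $M\setminus H$ and $\nu$ is $f$-invariant. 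Both routes use expansiveness for the entropy semicontinuity and the ergodic decomposition at the end; your version is more self-contained, while the paper's is shorter at the cost of relying on Markov-partition machinery that is special to Anosov maps (so your argument would in fact be more portable to settings without finite Markov partitions).

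For $\pa_\G\le\underline\rho$, the paper simply cites \cite[Theorem A]{dwy2}; your ``surviving tubes'' construction, balancing the $\G(\Omega)$ shrinking rate $r_0 e^{-\gamma i}$ against the hyperbolic contraction of $f^{-(n-i)}$ on $E^u$ and of $f^i$ on $E^s$ over the window $0\le i\le n$, is essentially an unpacking of that cited proof rather than a distinct method, but it is correctly set up: the choice of $D_n(x)$ at scale $e^{-\gamma n}$ is the right one to make the tube survive throughout the window, the $(n,\ve_0)$-separation of $E_n$ does give pairwise disjointness of the $U_n(x)$, and the Egorov/Birkhoff/Brin--Katok bookkeeping delivers $\#E_n\ge e^{n(h_\nu-\eta)}$ with the uniform bounds on $S_n\chi^+$ and on the $\G$-radius that the volume estimate needs. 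Finally, your treatment of the case $\partial H\cap\Omega=\emptyset$ — deduce $d(\Omega,\overline H)>0$ from compactness, so every $\nu\in\I(\Omega)$ lies in $\G(\Omega)$ with a constant radius $r=d_0$ — is exactly the paper's argument.
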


Note that since $\Omega(H)$ and $\partial H$ are both compact,
the condition $\partial H \cap \Omega (H)=\emptyset$ is equivalent
to $d(\partial H, \Omega(H)) > 0$, where $d$ is distance in the Hausdorff metric.  Hence, in this case, $\G=\I$.

In what follows, we consider holes in $\ho$ that have a finite number of connected components.   Let $ \hofin$ denote this collection; $\hofin$ inherits the topology induced by the Hausdorff metric.
In the two-dimensional case, we call a hole $H \in \hofin$ {\em regular} if its boundary is
comprised of a finite number of local stable and unstable manifolds.

Theorem~\ref{thm:anosov} points to the existence  of invariant measures
for $\f$ that give too much weight
to small neighborhoods of $\partial H$ as a source of potential problems.
Our next set of results illustrates two points.  (1) The
condition $\partial H\cap \Omega(H)= \emptyset$
is quite general:  It
holds for an open (and in dimension two, dense) set of holes and
for a full measure set of parameters along sequences of regular holes;
(2) exceptional situations do occur.  Indeed, these exceptions
cause the escape rate to vary - otherwise, it remains locally constant.
Proposition~\ref{prop:path} presents situations where the inequalities in
Theorem~\ref{thm:anosov} are strict.  Although the examples
are contrived, they need to be taken
into account in the formulation of general results.

\begin{proposition}
\label{prop:open dense}
Let $f:M \circlearrowleft$ be as in Theorem~\ref{thm:anosov}.  Then
\begin{enumerate}       \vspace{-6 pt}
  \item[(a)]  the set of $H \in \hofin$ that satisfy $\partial H\cap  \Omega(H) =\emptyset$ is
  open in $\hofin$;            \vspace{-6 pt}
  \item[(b)]  if dim$(M)=2$ and $f$ is topologically transitive, then the set of $H \in \hofin$ that satisfy
  $\partial H\cap \Omega(H)=\emptyset$ is dense in $\hofin$.
\end{enumerate}
\end{proposition}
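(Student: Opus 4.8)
The two parts are independent; (a) is a soft compactness argument, while (b) is where the hyperbolic structure enters in an essential way.

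\emph{Part (a).} I would start from the identity $\Omega(H)=M\setminus\bigcup_{n\in\mathbb Z}f^nH$, valid since $f$ is a diffeomorphism, so that $\partial H\cap\Omega(H)=\emptyset$ is the same as $\partial H\subseteq\bigcup_n f^nH$. Since $\partial H$ is compact and each $f^nH$ is open, there is an $m$ with $\overline H\subseteq W:=\bigcup_{|n|\le m}f^nH$ (the term $n=0$ gives $H\subseteq W$), hence $\delta_0:=d(\overline H,M\setminus W)>0$. Applying the Lebesgue number lemma to the cover $\{f^nH\}_{|n|\le m}$ of the compact set $\overline{N_{\delta_0/2}(\overline H)}\subseteq W$ gives $r>0$ such that each $p$ in that set satisfies $B(p,r)\subseteq f^{n(p)}H$ for some $|n(p)|\le m$; pulling back and using a uniform bound on $\|Df^{\pm n}\|$ for $|n|\le m$ then yields $d\big(f^{-n(p)}p,\partial H\big)\ge\rho$ for a constant $\rho>0$ depending only on $H$. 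Now fix $0<\eta<\min(\delta_0/4,\rho/2)$. For $H'$ sufficiently close to $H$ in $\hofin$ one has $\partial H'\subseteq N_\eta(\partial H)$ and also $H\setminus N_{2\eta}(\partial H)\subseteq H'$ — both elementary consequences of Hausdorff-closeness of the closures and of the boundaries, using that $M$ is a length space. Then every $p\in\partial H'$ lies in $\overline{N_{\delta_0/2}(\overline H)}$, so $f^{-n(p)}p\in H$ with $d(f^{-n(p)}p,\partial H)\ge\rho>2\eta$, whence $f^{-n(p)}p\in H'$ and $p\in f^{n(p)}H'\subseteq\bigcup_n f^nH'$, i.e.\ $p\notin\Omega(H')$. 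Thus $\partial H'\cap\Omega(H')=\emptyset$, proving openness. The only point needing care is which precise form of Hausdorff-closeness the topology on $\hofin$ provides; I would state the two facts used above explicitly at the start of Section~\ref{anosov}.

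\emph{Part (b), reduction.} Given $H_0\in\hofin$ and $\ve>0$, I claim it is enough to produce an open $H'$ with finitely many components, $\overline{H_0}\subseteq H'\subseteq N_\ve(\overline{H_0})$, and $\partial H'\cap\Omega(H_0)=\emptyset$: then $H'\supseteq H_0$ forces $\Omega(H')\subseteq\Omega(H_0)$, so $\partial H'\cap\Omega(H')=\emptyset$ as well, and $H'$ is $\ve$-close to $H_0$ in $\hofin$. (If $\partial H_0\cap\Omega(H_0)=\emptyset$ already, take $H'=H_0$.) The key input is that, under the hypotheses of (b), $\Omega(H_0)$ is \emph{totally disconnected}; this is where dimension two and transitivity are used. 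Since $f$ is a transitive Anosov diffeomorphism of a surface, its one–dimensional stable and unstable foliations are minimal, so every sufficiently long stable or unstable curve meets the open set $H_0$. If $\Gamma\subseteq\Omega(H_0)$ were connected with more than one point, then near a point $x\in\Gamma$ either $\Gamma$ contains a point whose unstable coordinate differs from that of $x$ — in which case $f^n\Gamma$ is connected and inflates along the unstable direction, so for large $n$ it contains a long unstable curve and hence meets $H_0$ — or $\Gamma$ lies locally inside $W^s(x)$ and so contains a nondegenerate stable arc, whose $f^{-n}$-images are long stable curves and hence meet $H_0$. Either case contradicts $f^{\pm n}\Gamma\subseteq\Omega(H_0)\subseteq M\setminus H_0$. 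Being compact and totally disconnected, $\Omega(H_0)$ is zero-dimensional and has a basis of relatively clopen sets.

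\emph{Part (b), construction.} I would then cover the compact set $\partial H_0$ by finitely many small open sets $O_1,\dots,O_k$ with $\partial O_i\cap\Omega(H_0)=\emptyset$: near a point of $\partial H_0\setminus\Omega(H_0)$, take a small ball disjoint from $\Omega(H_0)$; near a point of $A:=\partial H_0\cap\Omega(H_0)$, choose a small relatively clopen $Z\subseteq\Omega(H_0)$ containing it and then an open $O$ with $\Omega(H_0)\cap O=\Omega(H_0)\cap\overline O=Z$, so that $\partial O\cap\Omega(H_0)=\emptyset$. Set $H'=H_0\cup O_1\cup\dots\cup O_k$. Then $\overline{H_0}\subseteq H'$ (since $\partial H_0\subseteq\bigcup_iO_i$), $H'$ is open with finitely many components, and $H'\subseteq N_\ve(\overline{H_0})$ once the $O_i$ have radius $<\ve$ about points of $\partial H_0$; moreover $\partial H'\subseteq\partial H_0\cup\bigcup_i\partial O_i$, where the $\partial O_i$ avoid $\Omega(H_0)$ by construction and $\partial H_0\cap\Omega(H_0)=A\subseteq\bigcup_iO_i\subseteq\mathrm{int}(H')$ is disjoint from $\partial H'$. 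Hence $\partial H'\cap\Omega(H_0)=\emptyset$, which by the reduction above proves (b). The bookkeeping in (a) and the assembly of $H'$ in (b) are routine; the real content — and where I expect to spend the most effort — is the total disconnectedness of $\Omega(H_0)$, i.e.\ making the foliation-stretching dichotomy rigorous (quantifying how a connected set inflates under $f^{\pm n}$, and why long stable/unstable curves are dense) via the standard theory of transitive Anosov systems on surfaces. It is worth stressing that mere nowhere-density of $\Omega(H_0)$ would not suffice: thickening $H_0$ through the level sets of $d(\cdot,\overline{H_0})$ can meet $\Omega(H_0)$ for \emph{every} value of the thickening parameter, so the zero-dimensionality is genuinely used.
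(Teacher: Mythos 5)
Your part (a) is correct and takes essentially the same route as the paper: both hinge on compactness of $\partial H$ and on the fact that a fixed neighborhood of $\partial H$ is mapped deeply into $H$ within a bounded number of iterates, so that the same remains true after a Hausdorff-small perturbation of the hole. (The paper packages this by showing $\Omega(H')=\Omega(H)$, which you do not need, and both versions are informal at the same point about exactly what the Hausdorff topology on $\hofin$ controls, which you flag.)

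Your part (b) is a genuinely different route, and it has a gap at its crux. The paper argues constructively: each smooth arc of $\partial H$ is replaced by a nearby local stable (or unstable) manifold through a point with dense forward (resp.\ backward) orbit, so that the arc and a whole neighborhood of it is mapped by some iterate of $f$ into a fixed ball $B_\ve\subset H$; this yields $d(\partial H_\Gamma,\Omega(H_\Gamma))>0$ and, in addition, the approximating hole is \emph{regular} — precisely the extra information recorded in the Remark following the Proposition and used thematically in Proposition~\ref{prop:sequence}. Your plan instead rests on the assertion that $\Omega(H_0)$ is totally disconnected. That assertion is true, but the justification you give does not establish it: when $\Gamma\subset\Omega(H_0)$ is connected and contains points with distinct unstable coordinates, it is \emph{not} the case that $f^n\Gamma$ eventually "contains a long unstable curve." A connected set whose unstable diameter grows, for instance the graph of a continuous function over an unstable segment, may contain no unstable arc at all, at any time. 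To make the dichotomy rigorous one must pass through the stable holonomy: in a local product chart the projection of $\Gamma$ along local stable leaves onto a reference unstable leaf is connected, hence a nondegenerate interval $[a,b]$; $f^n([a,b])$ is a genuine unstable arc of length $\geq c\lambda^n$, and by minimality of the unstable foliation (a property of transitive surface Anosov maps which you are implicitly invoking and should cite) eventually meets a fixed ball inside $H_0$; the points of $f^n\Gamma$ shadowing $f^n([a,b])$ along stable leaves then fall into $H_0$ as well, because the stable distances contract uniformly to zero, contradicting $f^n\Gamma\subset\Omega(H_0)$. The stable-arc case, and the subsequent clopen-basis construction of $H'$, are sound (modulo replacing $N_{d/2}(Z)$ by a finite union of balls to keep finitely many components). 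With the holonomy argument supplied, your proof works and gives a clean alternative to the paper's, at the price of not producing a regular approximating hole.
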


\begin{remark}
In the setting of Proposition~\ref{prop:open dense}(b), for each
  $H \in \hofin$ and $\ve>0$,  in fact there exists a regular hole $H_\ve$  such that
  $d(H, H_\ve) < \ve$ and $d(\partial H_\ve, \Omega(H_\ve))>0$.
  This is shown in our proof.
\end{remark}

\begin{proposition}
\label{prop:sequence}
Let dim$(M)=2$, $f$ be topologically transitive and suppose $\{ H_t \}_{t\in I} \subset \hofin$ is a
sequence of regular holes with a fixed number of
connected components satisfying:
\begin{enumerate}
  \item[(i)]  the number of smooth components of $\partial H_t$ is uniformly bounded on $I$;
  \item[(ii)]  $t \mapsto \partial H_t$ is continuous;
  \item[(iii)]  For any subinterval $J \subseteq I$ and any curve $\gamma$ locally
  transverse to $\{ \partial H_t \}_{t \in J}$, if $E \subset J$ has positive Lebesgue measure
  in I, then $\{ \gamma \cap \partial H_t \}_{t \in E}$ has positive Lebesgue measure on $\gamma$.
\end{enumerate}
Then $\Omega(H_t) \cap \partial H_t = \emptyset$ for an open and dense set of $t \in I$ and
the exceptional set has zero Lebesgue measure in $I$.
\end{proposition}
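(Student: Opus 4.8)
The plan is to establish separately that $G := \{t\in I : \Omega(H_t)\cap\partial H_t = \emptyset\}$ is open, that it is dense, and that $I\setminus G$ is Lebesgue-null. Openness should be quick: by hypothesis (ii), regularity, and the fact that the number of connected components of $H_t$ is fixed, $t\mapsto H_t$ is a continuous map $I\to\hofin$ (each $H_t$ is the open region enclosed by the finitely many stable and unstable arcs forming $\partial H_t$, and these enclosed regions vary continuously with their boundary), so $G$ is the preimage of the open set furnished by Proposition~\ref{prop:open dense}(a). Density is automatic once $I\setminus G$ is shown to be null, so the heart of the matter is that last claim.

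I would prove it by contradiction. Suppose $E\subseteq I$ has positive Lebesgue measure with $\Omega(H_t)\cap\partial H_t\neq\emptyset$ for all $t\in E$. For each such $t$ choose a smooth component of $\partial H_t$ meeting $\Omega(H_t)$; by (i) there are only boundedly many smooth components, and by (ii) they may be labeled consistently in $t$, so after passing to a positive-measure subset I may assume this is always the ``$j$-th'' arc $\gamma_t$, of a fixed type. Replacing $f$ by $f^{-1}$ if necessary --- this preserves $\Omega(H_t)$, topological transitivity, and all of (i)--(iii), while exchanging the stable and unstable foliations --- I may assume each $\gamma_t$ is an \emph{unstable} arc. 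Fix a Lebesgue density point $t_0$ of $E$; shrinking to a short subinterval $J_0\ni t_0$ and using (ii), I may also assume there is a fixed nonempty open set $\hat H$ with smooth boundary and a fixed $\epsilon_0 > 0$ with $N_{\epsilon_0}(\hat H)\subseteq H_t$ for all $t\in J_0$, and that the arcs $\{\gamma_t\}_{t\in J_0}$ are all $C^1$-close to $\gamma_{t_0}$ (two Hausdorff-close unstable arcs lie on $C^1$-close leaves). Writing $\Omega^-(A) = \bigcap_{n\ge 0} f^n(M\setminus A)$ for the set of points whose backward orbit avoids $A$, one has, for any $x_t\in\gamma_t\cap\Omega(H_t)$, that $x_t\in\Omega^-\!\bigl(N_{\epsilon_0}(\hat H)\bigr)$, so the backward orbit of $x_t$ stays at distance $\ge\epsilon_0$ from $\hat H$ forever.

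The final step is a transversality argument against hypothesis (iii). Let $\lambda>1$ be a hyperbolicity rate for $f$ and $n^*$ a threshold --- uniform over $J_0$, since $\operatorname{length}(\gamma_t)$ is uniformly bounded there --- with $C\lambda^{-n}\operatorname{length}(\gamma_t) < \epsilon_0/2$ for all $n\ge n^*$. Since $f$ is topologically transitive, the SRB measure $\mu^-$ of $f^{-1}$ is ergodic with full support, so $\mu^-(\hat H)>0$; hence $f^{n^*}\!\bigl(\Omega^-(\hat H)\bigr)$ has $\mu^-$-measure $0$, and --- the conditionals of $\mu^-$ on $f$-stable leaves being equivalent to Lebesgue --- it is Lebesgue-null on $\mu^-$-a.e.\ stable leaf. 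Take the transversal $\delta$ to be a short piece of such a stable leaf crossing $\gamma_{t_0}$ transversally at an interior point; for $t\in J_0$ it then meets $\gamma_t$ in one point $p_t$, with $\delta\cap\partial H_t=\{p_t\}$ (shrinking $\delta$), so $\delta$ is locally transverse to $\{\partial H_t\}_{t\in J_0}$. Because $\gamma_t$ is unstable, $f^{-n}\gamma_t$ has length $\le C\lambda^{-n}\operatorname{length}(\gamma_t)$, and as $x_t,p_t\in\gamma_t$ this gives $d(f^{-n}p_t,f^{-n}x_t)<\epsilon_0/2$ for $n\ge n^*$; so $d(f^{-n}p_t,\hat H)\ge\epsilon_0/2$ for all $n\ge n^*$, i.e.\ $p_t\in f^{n^*}\!\bigl(\Omega^-(\hat H)\bigr)$. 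Thus for $t\in E\cap J_0$,
\[
\delta\cap\partial H_t=\{p_t\}\ \subseteq\ \delta\cap f^{n^*}\!\bigl(\Omega^-(\hat H)\bigr),
\]
so $\{\delta\cap\partial H_t : t\in E\cap J_0\}$ is Lebesgue-null in $\delta$; but $E\cap J_0$ has positive measure ($t_0$ being a density point), and hypothesis (iii) then forces this same set to have \emph{positive} Lebesgue measure in $\delta$ --- a contradiction. Hence $I\setminus G$ is null, which also yields density of $G$.

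The hard part will be the localization in the third paragraph: manufacturing the reference hole $\hat H$ with a \emph{uniform} collar $N_{\epsilon_0}(\hat H)\subseteq H_t$, a uniform threshold $n^*$, and the $C^1$-control on the $\gamma_t$, all at once on one interval $J_0$. That bundle of uniformities is exactly what converts the soft hypothesis ``$\gamma_t$ carries a point of $\Omega(H_t)$'' into the quantitative conclusion ``$\delta\cap\partial H_t$ lies in a fixed null subset of $\delta$'', which is the only shape in which hypothesis (iii) can be used. A lesser but still necessary ingredient is the measurable pigeonhole onto a single smooth boundary component of fixed type, together with the $f\leftrightarrow f^{-1}$ symmetry handling the stable-component case.
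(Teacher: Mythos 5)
Your proof is correct, but it takes a genuinely different route from the paper's for the main (measure-zero) claim. Both arguments begin the same way: openness via Proposition~\ref{prop:open dense}(a), density as a consequence of nullity, then a positive-measure pigeonhole to a set of parameters where the boundary arc carrying a point of $\Omega(H_t)$ is of a single type, followed by a transversal fed into hypothesis (iii). The difference is the null set on the transversal. The paper's is soft: it observes that if a stable arc $W^s_{\loc}(x_t)\subset\partial H_t$ contains a survivor $x_t\in\Omega(H_t)$, then no point on that arc can have a dense forward orbit (a forward-dense point's entire stable leaf eventually enters any open set, which would drag $x_t$ into $H_t$); integrating these arcs over a positive-measure $t$-set via property (iii) and absolute continuity of the foliation produces a positive-Lebesgue set lacking dense forward orbits, contradicting that such points are full measure for a transitive Anosov map, and the unstable case is handled symmetrically with backward density. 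Your null set is quantitative: you manufacture a fixed interior collar $N_{\epsilon_0}(\hat H)\subset H_t$ and threshold $n^*$, use uniform backward contraction along the unstable arc to push the intersection points $p_t=\delta\cap\gamma_t$ into $f^{n^*}\bigl(\Omega^-(\hat H)\bigr)$, and invoke leafwise nullity of that backward survivor set via the SRB measure of $f^{-1}$. Both ultimately rest on the same two standard facts for transitive $C^{1+\epsilon}$ Anosov maps --- absolutely continuous invariant foliations and a full-support SRB measure --- but the paper's ``not forward-dense'' criterion requires no constants, whereas your version must verify the bundle of uniformities (collar, threshold, $C^1$-closeness of the arcs) that you correctly flag at the end; these are available from (i)--(ii) together with continuity of the Anosov splitting, but they are real work the paper sidesteps. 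One small point to make explicit in your version: when you pigeonhole to the ``$j$-th'' smooth component ``of a fixed type,'' you should confirm that the number and types of smooth boundary components are locally constant in $t$; this follows from (i), (ii) and continuity of the stable/unstable splitting, but the consistent labeling deserves a sentence.
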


\noindent Note that the sequence $\{ H_t \}_{t \in I}$ is neither assumed to converge to a point nor to be
monotonic.\begin{corollary}
\label{cor:devil}
Let $f: M \circlearrowleft$ and $\{ H_t \}_{t \in I}$ be as in Proposition~\ref{prop:sequence} and
let $-\underline \rho(t)$ and $-\overline \rho(t)$ denote the
upper and lower escape rates from $M \setminus H_t$ with respect to Lebesgue.
Then
\begin{enumerate}
  \item[(a)] $\rho(t)$ exists and is locally constant on an open and
full measure set of $t$.
\end{enumerate}
Now assume that $\{ H_t \}_{t \in I}$ is monotonically increasing.  Then
\begin{enumerate}
  \item[(b)] the functions $t \mapsto \underline \rho(t)$ and $\overline \rho(t)$
are monotonically decreasing  and each forms a devil's staircase, possibly with jumps;
  \item[(c)] $\underline \rho(\cdot )$ and $\overline \rho(\cdot )$
   are in general neither upper nor
lower semi-continuous once they are out of the small hole regime;
 \item[(d)]  if $\overline \rho(\cdot )$ is lower semi-continuous at $t$, then $\rho(t)$ exists;
 \item[(e)] if $\underline \rho(\cdot )$ is upper semi-continuous at $t$, then $\rho(t)$ exists.
\end{enumerate}
\end{corollary}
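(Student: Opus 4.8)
The plan is to treat Proposition~\ref{prop:sequence} as the main input and deduce everything else from it together with Theorem~\ref{thm:anosov} and the monotonicity of the sets $M^n(H)$. Set $G := \{\,t\in I : \partial H_t\cap\Omega(H_t)=\emptyset\,\}$, which by Proposition~\ref{prop:sequence} is open, dense, and of full Lebesgue measure in $I$; on $G$, Theorem~\ref{thm:anosov} gives that $\rho(t)$ exists and equals $\pa_\G=\pa_\I$, the supremum of $P_\nu$ over the ergodic $f$-invariant measures supported on $\Omega(H_t)$.

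For part~(a) the crux is a local-constancy statement: \emph{if $H$ is a regular hole with $\partial H\cap\Omega(H)=\emptyset$, then $H'\mapsto\Omega(H')$ is constant for regular holes $H'$ in a Hausdorff-neighborhood of $H$.} I would prove this by fixing $t_0\in G$ and $\delta>0$ with $d(\overline{H_{t_0}},\Omega(H_{t_0}))>3\delta$. For $t$ with $d_H(\overline{H_t},\overline{H_{t_0}})<\delta$ one gets $d(\overline{H_t},\Omega(H_{t_0}))>2\delta$, so the invariant set $\Omega(H_{t_0})$ is disjoint from $\overline{H_t}$, hence $\Omega(H_{t_0})\subseteq\Omega(H_t)$; this inclusion is robust and, writing out the definition of $\G$ (shrinking the radius $r(x,\gamma)$ below $\delta$), it also yields $\G(\Omega(H_{t_0}))\subseteq\G(\Omega(H_t))$, so $\rho(t)\ge\rho(t_0)$. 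The reverse inclusion $\Omega(H_t)\subseteq\Omega(H_{t_0})$ is the hard direction: an ``extra'' point $x\in\Omega(H_t)\setminus\Omega(H_{t_0})$ would have a full orbit avoiding $H_t$ but meeting $H_{t_0}\setminus H_t$, a thin region that shrinks to $\partial H_{t_0}$ as $t\to t_0$ and that, the hole being \emph{regular}, is bounded by local stable and unstable manifolds; I would argue this region carries no nontrivial recurrence for $f|_{M\setminus H_t}$ when $\partial H_{t_0}$ is at positive distance from $\Omega(H_{t_0})$, so no such $x$ exists and $\Omega(H_t)=\Omega(H_{t_0})$. Since $G$ is open, this makes $\rho(\cdot)=\pa_\I(\Omega(H_{t_0}))$ constant on a neighborhood of each $t_0\in G$, which is part~(a).

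For part~(b), monotonicity of $\{H_t\}$ gives $M^n(H_{t'})\subseteq M^n(H_t)$ whenever $t\le t'$, so $\tfrac1n\log\mu(M^n(H_{t'}))\le\tfrac1n\log\mu(M^n(H_t))$ for every $n$, and passing to $\liminf$ and $\limsup$ shows $t\mapsto\underline\rho(t)$ and $t\mapsto\overline\rho(t)$ are monotonically decreasing. Writing $G$ as a countable disjoint union of open intervals, part~(a) shows $\underline\rho=\overline\rho=\rho$ is constant on each of them, so each of $\underline\rho,\overline\rho$ is a monotone function that is locally constant on an open dense set of full measure---i.e.\ a devil's staircase---and can fail continuity only on the closed null set $I\setminus G$, whence ``possibly with jumps''. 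Part~(c) is a negative assertion that must be witnessed by examples: for a decreasing function, upper semicontinuity at a point is equivalent to left-continuity there and lower semicontinuity to right-continuity, so it suffices to exhibit families of regular holes producing a left jump of $\overline\rho$ and a right jump of $\underline\rho$ at non-small parameters; these are exactly the situations of Proposition~\ref{prop:path}, where the inequalities of Theorem~\ref{thm:anosov} become strict.

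For (d) and (e) I would use the density of $G$ and elementary properties of monotone functions. Since $G$ is dense, pick $t_n\downarrow t$ with $t_n\in G$; by part~(a), $\underline\rho(t_n)=\overline\rho(t_n)=\rho(t_n)$, and by monotonicity $\lim_n\rho(t_n)=\lim_{s\to t^+}\underline\rho(s)=\lim_{s\to t^+}\overline\rho(s)=:L$ with $L\le\underline\rho(t)\le\overline\rho(t)$. If $\overline\rho$ is lower semicontinuous at $t$ then it is right-continuous there, so $\overline\rho(t)=L$, whence $\underline\rho(t)\ge L=\overline\rho(t)\ge\underline\rho(t)$ and $\underline\rho(t)=\overline\rho(t)$; thus the limit in \eqref{eq:escape def} exists, which is (d). Part~(e) is the mirror image, approaching $t$ from the left through $G$ and using that upper semicontinuity of $\underline\rho$ at $t$ is equivalent to its left-continuity. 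The main obstacle is the hard direction of the local-constancy lemma in part~(a)---ruling out extra recurrence of $f|_{M\setminus H_t}$ in the thin region $H_{t_0}\setminus H_t$, for which one genuinely needs the hole to be regular; everything else reduces to Theorem~\ref{thm:anosov}, Proposition~\ref{prop:sequence}, and soft monotone-function arguments, with the counterexamples for (c) deferred to Proposition~\ref{prop:path}.
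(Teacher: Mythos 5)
Your treatment of parts (b), (d), and (e) matches the paper's own proof almost step for step: monotonicity of $M^n$ under set inclusion gives monotonicity of $\underline\rho$ and $\overline\rho$; the devil's staircase structure follows from part (a) plus monotonicity; and the semicontinuity arguments via approximation by $t_n\in G$ from the right (resp.\ left) are exactly what the paper does. Part (c) you defer to Proposition~\ref{prop:path}, which is the right source, although the paper does a little more work: Proposition~\ref{prop:path} supplies individual holes, and the paper's proof then explicitly builds one-parameter families $\{H_t\}$ around those holes (sliding a stable manifold across $W^s_{\loc}(p)$) to realize the jumps in $\underline\rho$ and $\overline\rho$; this extra step is glossed over in your sketch.

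The genuine gap is in part (a). You correctly identify that the crux is local constancy of $\Omega(H_t)$ for $t$ near $t_0\in G$, and you correctly establish the easy inclusion $\Omega(H_{t_0})\subseteq\Omega(H_t)$. But for the reverse inclusion you invoke regularity of the hole and an unsubstantiated ``no nontrivial recurrence in the thin region'' claim. This is a hand-wave, and it is also aimed in the wrong direction: the paper's argument for $\Omega(H)=\Omega(H')$ is contained in the proof of Proposition~\ref{prop:open dense}(a) and does not use regularity at all. The mechanism is purely a compactness argument: since $d(\partial H_{t_0},\Omega(H_{t_0}))>0$, every point of $\partial H_{t_0}$ has some iterate landing strictly inside $H_{t_0}$; covering the compact set $\partial H_{t_0}$ by finitely many balls $B(x_i,\varepsilon(x_i))$ with $f^{n(x_i)}(B(x_i,\varepsilon(x_i)))\subset H_{t_0}$, one gets a neighborhood $U_0$ of $\partial H_{t_0}$ and a set $V\subset H_{t_0}$, at positive distance from $\partial H_{t_0}$, such that every orbit entering $U_0$ eventually enters $V$. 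For $H_t$ Hausdorff-close to $H_{t_0}$, the symmetric difference $H_t\triangle H_{t_0}$ lies in $U_0$ and $V\subset H_t\cap H_{t_0}$; so if the orbit of $x$ avoids $H_t$ but enters $H_{t_0}$, that entry point lies in $H_{t_0}\setminus H_t\subset U_0$, hence a later iterate lies in $V\subset H_t$, a contradiction. This gives $\Omega(H_t)\subseteq\Omega(H_{t_0})$ cleanly, without regularity. You should replace the recurrence hand-wave with this compactness argument, which the paper already has in hand from Proposition~\ref{prop:open dense}(a).
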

Taken together, statements (d) and(e) above imply that $\rho(t)$ typically exists even when
$\partial H_t \cap \Omega(H_t) \neq \emptyset$.  The only values of $t$ at which $\rho(t)$
may not exist are those at which $\overline \rho(t)$ and $\underline \rho(t)$ jump and
fail to be lower and upper semi-continuous, respectively.  This can occur at most
countably many times along the sequence.

\begin{remark}
If one considers the recent results in \cite{buni yur, kl escape, ferg polli}
regarding the existence of the derivative of $\rho(t)$ in the zero hole limit in a number of hyperbolic
settings, the picture of $\rho(t)$ that emerges from Corollary~\ref{cor:devil} is
rather surprising.  It indicates that along sequences of regular holes, $\rho(t)$ cannot be smooth on any interval containing
$0$: Indeed $\rho(t)$ cannot even be absolutely continuous on any interval  on which
it is not constant.
\end{remark}

\begin{proposition}
\label{prop:path}
There are examples of Anosov diffeomorphisms  with regular holes where
\[
\mbox{(a) $\pa_{\G} < \rho(\mu) = \pa_{\I}$; \qquad
(b) $\pa_{\G} = \rho(\mu) < \pa_{\I}$;  \; \; and \;
(c) $\pa_{\G} < \underline \rho(\mu) \leq \overline \rho(\mu) < \pa_{\I}$.}
\]
\end{proposition}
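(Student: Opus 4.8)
Here is the approach I would take.

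The plan is to realize all three cases with one flexible construction: $f$ will be a $C^\infty$ Anosov diffeomorphism of $\mathbb{T}^2$ obtained from a linear hyperbolic toral automorphism $f_0$ by one or two localized modifications, and $H\in\hofin$ will be a regular hole chosen so that the survivor set is a disjoint union of a couple of fixed points and a small horseshoe. Concretely, I fix one or two fixed points of $f_0$ and, on small neighborhoods of them, replace $f_0$ by an Anosov $f$ that agrees with $f_0$ off these neighborhoods but whose unstable Lyapunov exponent at those points equals positive numbers $\chi_0$ (and, in case (c), also $\chi_1$ with $\chi_0<\chi_1$) that may be taken as small as we wish, while the one‑step unstable expansion $\|Df|_{E^u}\|$ stays $\ge e^{b}$ off those neighborhoods for a fixed $b>0$ close to the constant expansion rate of $f_0$. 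Far from these points I fix a small horseshoe $\Lambda$ for $f$ of arbitrarily small topological entropy, together with a Markov neighborhood $V$. The hole $H$ is then taken so that $M\setminus H$ is precisely the disjoint union of $V$ with small regions near the modified points — a ``half‑slab'' $S=\{-\delta\le u\le 0,\ |s|\le\delta'\}$ in coordinates adapted to the local stable/unstable foliations in one case, or merely the curve $W^s_{\loc}(\cdot)$ in another — everything else being the hole. Each of $V,S$ is bounded by finitely many local stable and unstable manifolds, so $H$ is a regular hole with finitely many components, and each modified point lies on $\partial H$ (it sits on an edge of its piece).

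Two structural facts then apply in every case. Because $M\setminus H$ is the disjoint union of $V$ and the small pieces near the fixed points, and because $H$ separates these pieces (an orbit leaving any one of them lands in $H$, since the pieces are small, mutually far apart, and near their respective fixed points or near $\Lambda$), any full orbit contained in $M\setminus H$ lies entirely in $V$ or entirely in one small piece; in $V$ the maximal invariant set is $\Lambda$, and in a half‑slab or a local stable curve the only point whose whole orbit stays is the fixed point itself (the unstable coordinate grows under iteration and the stable coordinate grows under back‑iteration). Hence $\Omega(H)=\Lambda\sqcup\{p\}$ in cases (a) and (b) and $\Omega(H)=\Lambda\sqcup\{p_0\}\sqcup\{p_1\}$ in case (c), so $\I(\Omega)=\I(\Lambda)\cup\{\delta_{p_i}\}$. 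Since $\Lambda$ lies a positive distance from $\partial H$ we have $\I(\Lambda)\subset\G(\Omega)$ and $\pa_\G=\sup_{\I(\Lambda)}P_\nu=P(-\chi^+,\Lambda)\le h_{\mathrm{top}}(\Lambda)-b$; since each $p_i\in\partial H$ we have $\delta_{p_i}\notin\G(\Omega)$ while $P_{\delta_{p_0}}=-\chi_0$; and choosing $h_{\mathrm{top}}(\Lambda)<b-\chi_0$ gives $P(-\chi^+,\Lambda)<-\chi_0$, so in all cases
\[
\pa_\G=P(-\chi^+,\Lambda)<-\chi_0=\pa_\I .
\]
It remains to locate the escape rate, and here the cases diverge through the shape of the pieces near the fixed points.

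In case (a) the piece at $p$ is the half‑slab $S$: the points of $S$ whose unstable coordinate is at most of order $\delta e^{-\chi_0 n}$ stay $\delta$‑close to $p$ for $n$ steps, hence stay in $S\subset M\setminus H$, so $\mu(M^n)\ge c\,e^{-\chi_0 n}$ for some $c>0$; the survivors starting in $V$ contribute $\asymp e^{P(-\chi^+,\Lambda)n}$ by the Chernov--Markarian escape‑rate formula \cite{chernov mark1, chernov mark2} (the hole $M\setminus V$ is Markov), and these are the only contributions, so $\mu(M^n)\asymp e^{-\chi_0 n}$ and $\rho(\mu)=-\chi_0=\pa_\I>\pa_\G$. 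In case (b) the piece at $p$ is only the Lebesgue‑null curve $W^s_{\loc}(p)$, so there is no surviving slab and $\mu(M^n)\asymp e^{P(-\chi^+,\Lambda)n}$, whence $\rho(\mu)=P(-\chi^+,\Lambda)=\pa_\G<-\chi_0=\pa_\I$. In case (c) the piece at $p_0$ is the curve $W^s_{\loc}(p_0)$ and the piece at $p_1$ is a half‑slab, and choosing $h_{\mathrm{top}}(\Lambda)<b-\chi_1$ makes the surviving slab (decaying at rate $e^{-\chi_1 n}$) dominate the $V$‑contribution, so $\rho(\mu)=-\chi_1$; combined with the display this gives $\pa_\G=P(-\chi^+,\Lambda)<-\chi_1=\rho(\mu)<-\chi_0=\pa_\I$, which is (c) (with $\underline\rho=\overline\rho=\rho$).

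The step I expect to be the real obstacle is the geometric construction of the hole: producing an $H$ for which $M\setminus H$ is exactly a Markov horseshoe neighborhood together with one or two small regions near the modified fixed points — so that $\partial H$ is genuinely a finite union of local stable and unstable manifolds, $H$ has finitely many components, and the separation property holds (no orbit crosses from one piece to another without entering $H$) — all while keeping $f$ Anosov with the prescribed small exponents and $h_{\mathrm{top}}(\Lambda)$ below $b-\chi_1$. Once the hole is built, the remaining estimates are routine: the survival of the shadowing slab is the explicit local dynamics near a hyperbolic fixed point (with bounded distortion from the $C^{1+\ve}$ regularity), the decay rate of the $V$‑survivors is the classical escape‑rate formula for a Markov hole, and the identifications $\pa_\I=-\chi_0$ and $\pa_\G=P(-\chi^+,\Lambda)$ follow from the variational principle applied to the two‑ or three‑piece survivor set together with Theorem~\ref{thm:anosov}.
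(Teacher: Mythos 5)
Your approach is correct in outline and genuinely different from the paper's. Both proofs rest on the same three ingredients: (i) a periodic orbit lying on $\partial H$ whose unstable exponent can be made as small as desired, which inflates $\pa_\I$; (ii) a geometric arrangement of $H$ around that orbit that either lets a half-slab of positive Lebesgue measure survive (so the slab controls $\mu(M^n)$ and forces $\rho = \pa_\I$) or expels the entire neighborhood (so the orbit inflates $\pa_\I$ but is invisible to $\rho$); and (iii) the $\G$-measures all live a positive distance from $\partial H$, so their pressure can be pushed below whatever exponent is at play. Theorem~\ref{thm:anosov} then pins $\rho$ between $\pa_\G$ and $\pa_\I$.

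The essential difference is in the mechanism of step (ii). The paper expels the neighborhood \emph{dynamically}: in (b) by taking the expanding eigenvalue negative (orientation-reversing), so both sides of $W^s_{\loc}(p)$ swap across the stable curve and fall into $H$ in one step, and in (c) by using a period-two orbit $\{q,q'\}$ with a sign flip in the choice of which half-neighborhood is retained, so that $f(V_q^\ell), f(V_{q'}^r) \subset H$. You expel it \emph{geometrically}: $M\setminus H$ near $p_0$ is shrunk to the Lebesgue-null curve $W^s_{\loc}(p_0)$, so the hole has a slit and $p_0$ cannot feed mass into $\mu(M^n)$. Both are legitimate regular holes under the paper's definitions, but the paper's trick keeps the hole geometrically less degenerate. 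You also replace the paper's abstract observation (that $\pa_{\I_p}$ can be made small by taking $H$ large and $\lambda_p$ near $1$) with an explicit small-entropy Markov horseshoe $\Lambda$ and neighborhood $V$; this buys you concrete escape-rate formulas via Chernov--Markarian, but it requires the extra verifications you flag: that $H \in \hofin$ with stable/unstable boundary, that the pieces of $M\setminus H$ dynamically separate (arrangeable by making them small and far apart relative to $\|Df\|_\infty$), and that finite-time ``hopping'' between pieces does not alter the rate. On that last point you can in fact dispense with strict separation: a trajectory that spends $k$ steps near $\Lambda$ and $n-k$ near $p_1$ has measure of order $e^{P(-\chi^+,\Lambda)k}\,e^{-\chi_1(n-k)}$, and since you have arranged $P(-\chi^+,\Lambda) < -\chi_1$, the sum over $k$ converges and the slab still dominates at rate $e^{-\chi_1 n}$. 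So the gap you anticipate is real but fillable; the paper's argument sidesteps it by not decomposing $M\setminus H$ explicitly at all.
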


\begin{remark}
We conclude with an observation on large versus small holes in hyperbolic systems.
Evidently, no invariant measure
with pressure close enough to $0$ can be too concentrated near $\partial H$
for a large class of small holes since $t \mapsto \rho(t)$ is H\"older continuous in the setting of
Theorem~\ref{thm:continuity}.
On the other hand,
for larger holes invariant measures which maximize pressure
can live on $\partial H$ and create jumps in the escape rate as demonstrated
by  Corollary~\ref{cor:devil} and Proposition~\ref{prop:path}.
\end{remark}

%%%%%%%%%%%%%%%%%%%%%%%%%%%%%%%%%%%

\section{Proofs of Anosov Results}
\label{anosov}

%%%%%%%%%%%%%%%%%%%%%%%%%%%%%%%%%%%

\subsection{Proof of Theorem~\ref{thm:anosov}}

Recall that the conditions $\partial H \cap \Omega(H) = \emptyset$ and
$d(\partial H, \Omega(H)) > 0$ are equivalent since both sets are compact.
In this case $\G=\I$ so that $\pa_{\G} = \rho(H, \mu) = \pa_{\I}$
once the inequalities in the statement of Theorem~\ref{thm:continuity} are proved.
That $\pa_\G\leq \underline\rho(H,\mu)$ follows from \cite[Theorem A]{dwy2}.

To finish the proof, it suffices to show that $\overline\rho(H,\mu)\leq\pa_\I$.    To this end,  approximate $H$ by a sequence of increasing holes $H_n \subset H$, $H_n \in \ho$, such that each
$H_n$ is a union of finitely many elements of a Markov partition for $f$.
Since $f$ admits finite Markov partitions with arbitrarily small diameter, we
can choose the sequence so that $\cup_{n=1}^\infty H_n=H$.

We set $\mathring M_n = M \setminus H_n$ and in general denote by the subscript $n$ objects associated with $H_n$.  Note that $\mathring M_n$ is a decreasing sequence of
closed sets converging to $\mathring M = M \setminus H$.
The same is true of $\Omega_n$ and $\Omega$.  Since $H_n$ is a Markov hole, the escape rate $\rho_n=\rho(H_n,\mu)$
is well-defined and there exists $\nu_n \in \I(\Omega_n)$ such that
$\rho_n=P_{\nu_n}$ \cite{chernov mark2}.

Let $\nu$ be a limit point of the $\nu_n$.   Then $\nu$ is an $f$-invariant measure whose support is contained in $\Omega$, but $\nu$ need not be ergodic.  (Even if it were, it would not necessarally be an element of $\G$.)
Since $x\mapsto \log | \textrm{det}(Df\vert_{E^u})(x)|$ is a continuous function, we have
$\lim_{n \to \infty} \int \chi^+ d\nu_n = \int \chi^+ d\nu$.

In addition, $h_\nu(f) \ge \limsup_{n \to \infty} h_{\nu_n}(f)$ due to the expansiveness
of $f$.  We include the brief proof here for convenience.
Since $f$ is expansive, there exists $\ve > 0$ such that if $\xi$ is a finite
measurable partition of $M$ with diam$(\xi) < \ve$, then
$h_\eta(f, \xi) = h_\eta(f)$ for any invariant Borel measure $\eta$
\cite{bowen entropy}.  Fix such a partition $\xi$ with $\nu(\partial \xi) = 0$.
Let $H_\eta(\xi_k)$ denote the entropy of the partition
$\bigvee_{i=-k}^k f^i\xi$ with respect to a measure $\eta$, and for $\delta >0$ choose $k$
such that $\frac 1k H_\nu(\xi_k) \leq h_\nu(f) + \delta$.  Then
since $\frac 1k H_{\eta}(\xi_k)$ is a decreasing function of $k$ for any $\eta$, we have
\[
\limsup_{n \to \infty} h_{\nu_n}(f) = \limsup_{n \to \infty} h_{\nu_n}(f, \xi)
\leq \limsup_{n \to \infty} \frac 1k H_{\nu_n}(\xi_k)
=  \lim_{n \to \infty} \frac 1k H_\nu(\xi_k)
\leq h_\nu(f) + \delta,
\]
which proves the claim, since $\delta>0$ is arbitrary.

We have shown that
\[
P_\nu \ge \limsup_{n \to \infty} P_{\nu_n} = \limsup_{n \to \infty} \rho_n\ge \overline \rho,
\]
where the last inequality is true by monotonicity:
$\mathring M_n \supset \mathring M$ for each $n$.  By the ergodic decomposition,
there exists  a measure $\pi_\nu$ on $\I(\Omega)$ such that
$\nu = \int_{\I} \eta \, d\pi_\nu(\eta)$.  In fact, since $f$ and $\log |\det (Df|_{E^u})|$ are continuous,
we have $h_\nu(f) - \int \chi^+ d\nu = \int_{\I} (h_{\eta}(f) - \int \chi^+ d\eta) \, d\pi_\nu(\eta)$
(see for example \cite[Theorem 8.4]{walters}), so
there must exist an ergodic measure $\eta \in \I (\Omega)$ such that
$P_{\eta} \ge \overline{\rho}$.    This finishes the proof of Theorem~\ref{thm:anosov}

%%%%%%%%%%%%%%%%%%%%%%%%%%%%%%%%%%%%

\subsection{Proof of Proposition~\ref{prop:open dense}}

Throughout this section we assume that $H\in \hofin$ has only one connected component.  The case of multiple connected components is handled similarly, one
component at a time.

As a reminder, because $\Omega$ and $\partial H$ are compact, the condition that $\partial H\cap \Omega (H) =\emptyset$ is equivalent to $d(\partial H, \Omega(H))>0$.

To prove statement (a), choose $H \in \hofin$ such that $d(\partial H, \Omega(H))>0$.
Let $B(x,\varepsilon)$ denote the ball of radius $\varepsilon$ centered
at $x$. For each $x \in \partial H$, let $\varepsilon(x)$ and $n(x)$ be such
that $f^{n(x)}(B(x,\varepsilon(x))) \subset H$.  Since $\partial H$ is compact, we may
choose $x_1, \ldots, x_k$ so that
$U_0:=\cup_{i=1}^k B(x_i, \frac12 \varepsilon(x_i)) \supset \partial H$. Notice that
$V=\cup_{i=1}^k f^{(n(x_i))}(B(x_i, \frac12 \varepsilon(x_i))) \subset H$ is a positive distance
from $\partial H$.
Let $U_1 \subseteq U_0$ be a neighborhood of $\partial H$ that is a positive distance from $V\cup\Omega$.

Now consider a hole $H' \in \hofin$ with survivor set $\Omega'$ such that
$d(H,H') < \delta$ for some $\delta>0$.  By taking $\delta$ sufficiently small, we
can ensure that
$[(H'\setminus H)\cup(H\setminus H')]\subset U_1$.   If $x \not\in \Omega'$,
then $f^j x\in H'$ for some $j$.  Either $f^j x\in H$ or $f^j x\in U_0$, and in the latter case a further iterate of $x $ eventually lies in $V \subset H \cap H'$.  In both cases, $x\not\in \Omega$, so $\Omega \subset \Omega'$.   Similarly, $\Omega' \subset \Omega$.
To conclude, because $\partial H'  \subset U_1$ and $U_1$ is a positive distance from
$\Omega = \Omega'$, we  have $\partial H'\cap \Omega'=\emptyset$.

To prove
statement (b), fix $H \in \hofin$ and $\ve > 0$ sufficiently small that $H$ contains a
ball of diameter at least $5\ve$.
We approximate $\partial H$ by a union of finitely many
stable and unstable manifolds $\Gamma' = (\cup_i \gamma^s_i) \cup (\cup_i \gamma^u_i)$
such that $d(\Gamma', \partial H) < \ve/2$ where $\gamma^{s(u)}_i$ denotes the $i$th
(un)stable manifold.  Let $B_\ve \subset H$ be an open convex set such that
$d(B_\ve, H) > 2\ve$.

Now fix $\gamma^s_i$ and let $N_{\ve}(\gamma^s_i)$ denote the $\ve$ neighborhood of
$\gamma^s_i$ in $M$.  By transitivity of $f$, $\mu$-almost every $x \in N_{\ve/2}(\gamma_i^s)$
has a dense orbit.  Choose such an $x$ and let $\gamma^s_x$ be the local stable manifold through
$x$ of length at least $2|\gamma^s_i|$.  There exists an integer $n_x$ and an open set
$U_x \supset \gamma^s_x$ such that $f^{n_x}(U_x) \subset B_\ve$.

Now we modify $\Gamma'$ by replacing $\gamma^s_i$ with $\gamma^s_x$ and possibly
lengthening or shortening the $\gamma^u_j$ adjacent to $\gamma^s_i$ so that the ends
of $\gamma^u_j$ which formerly ended on $\gamma^s_i$ now end on $\gamma^s_x$.
We trim the ends of $\gamma^s_x$ as necessary.  We continue this process with each
$\gamma^s_i$ in forward time and each $\gamma^u_i$ in backward time.  In this way, we
construct $\Gamma$, a simple closed curve made up of finitely many stable and unstable manifolds $\gamma_k$,
which enjoys the following properties:  (i) $d(\Gamma, \partial H) < \ve$; (ii) for each
$\gamma_k$ there exists an open set $U_k \supset \gamma_k$
and $n_k \in \mathbb{Z}$ such that $f^{n_k}(U_k) \subset B_\ve$.  Thus letting
$H_{\Gamma}$ denote the hole with boundary $\Gamma$ and noting
that $B_\ve \subset H_\Gamma$ by construction, we have
$d(\partial H_{\Gamma}, \Omega(H_{\Gamma})>0$ as required.

%%%%%%%%%%%%%%%%%%%%%%%%%%%%%%%

\subsection{Proof of Proposition~\ref{prop:sequence}}
\label{sequence proof}

Let $\{ H_t \}_{t \in I}$ be a sequence of holes as described in the statement of
Proposition~\ref{prop:sequence}.  That the condition
$\Omega(H_t) \cap \partial H_t = \emptyset$ holds on an open set in $I$ follows from
Proposition~\ref{prop:open dense} since by assumption (ii) on the sequence,
$H_t$ varies continuously with $t$ in the Hausdorff metric.

Thus it suffices to show
that the exceptional set has Lebesgue measure 0 in $I$, for then an open set of full
Lebesgue measure is necessarily dense.
We do this by contradiction.  Let $E = \{ t \in I : \Omega(H_t) \cap \partial H_t \neq \emptyset \}$
and suppose $\ell(E)>0$, where $\ell$ denotes Lebesgue measure on $I$.

We partition $\{\partial H_t \}_{t \in I}$ into finitely many boxes $B_i$ sufficiently small
that all the elements of $\{ \partial H_t \cap B_i \}_{t \in I}$ are roughly parallel.
This is possible due to assumption (i) and the fact that the stable and unstable foliations are
H\"older continuous.  Let $E_i = \{ t \in I : \Omega(H_t) \cap \partial H_t \cap B_i \neq \emptyset \}$.
At least one of these sets must satisfy $\ell(E_i)>0$.  Fix one such index and call it $k$.
We suppose without loss of generality that the curves $\partial H_t$ in $B_k$
are all local stable manifolds.

Draw a curve $\gamma$ in $B_k$ that is uniformly transverse to each curve
$\partial H_t$ lying in $B_k$.
We choose one point
$x_t \in \Omega_t \cap \partial H_t \cap B_k$ for each $t \in E_k$.
Due to property (iii) of $\{ \partial H_t \}$,
we have
$\mu_\gamma(\gamma \cap W^s_{\loc}(x_t) : t \in E_k ) > 0$, where $\mu_\gamma$
denotes arclength on $\gamma$.

Note that if the forward orbit of $x$ is dense, then the entire stable manifold of
$x$ eventually falls into any open hole.  Since $x_t \in \Omega(H_t)$, the
curve $W^s_{\loc}(x_t)$ cannot contain any forward dense points, for then $x_t$ would fall into
$H_t$ under forward iteration as observed above.
Integrating these curves over $t \in E_k$, we see that
a positive $\mu$-measure set of points in $B_k$ do not have a dense forward orbit.
This contradicts the fact that points with dense forward orbits have full measure in $M$.
Thus $\ell(E)=0$ as required.

The case in which the curves $\partial H_t$ in $B_k$ are local unstable manifolds is
handled similarly using the fact that points with dense backward orbits have full
measure as well.

%%%%%%%%%%%%%%%%%%%%%%%%%%%%%%%

\subsection{Proof of Proposition~\ref{prop:path}}

(a) Let dim$(M)=2$ and let $p$ be a fixed point for $f$ with expanding
eigenvalue $\lambda >1$, i.e., $f$ is orientation preserving.
Let $W^s_{\loc}(p)$ denote the
local stable manifold through $p$.  Let $U$ be a neighborhood of $p$, divided into
two halves by $W^s_{\loc}(p)$: the left half is in $H$ and the right
one not, so that $W^s_{\loc}(p) \subset \partial H$. From this alone, we see that
$\underline \rho \ge - \log \lambda$.  We now construct $f$ and $H$ with two
additional features:  (i) Except for $W^s_{\loc}(p)$, all points in the right half
of $U$ eventually fall into the hole; this can be arranged by having $W^u(p)$
run into $H$. (ii) By taking $\lambda$ close enough to 1 and $H$
large enough, we can arrange for $\pa_{\I_p} < - \log \lambda$ where
$\I_p \subset \I$ is the set of ergodic invariant measures supported on
$\Omega \setminus \{ p \}$.
Thus $\pa_{\I} = - \log \lambda$ and as noted earlier
$\underline \rho \ge - \log \lambda$.  But
$\overline \rho \le \pa_{\I}$ by Theorem~\ref{thm:anosov} so that
$\rho$ is well-defined and $\rho = \pa_{\I} = - \log \lambda$.
Also, since $\G \subset \I_p$, we have $\pa_{\G} < \rho$.

\medskip \noindent
(b) We use the same setup as in (a), but now $\lambda<-1$, i.e.,
$f$ is orientation reversing.
Now $p \in \Omega$ as before, but both halves of $U$ on either side of
$W^s_{\loc}(p)$ fall into $H$ in finitely many steps so that
the escape rate is unrelated to the eigenvalue at $p$.  We
choose $\lambda$ close enough to $-1$ that $\overline \rho < \pa_{\I} = - \log |\lambda|$.
We further require that: (i) $H$ is a union of elements of a Markov partition for $f$;
(ii) $(\partial H \setminus W^s_{\loc}(p)) \cap \Omega = \emptyset$.  By (i) and the
results of \cite{chernov mark2}, $\rho(\mu)$ is well-defined and
there exists an ergodic invariant measure $\nu$ supported
on $\Omega$ such that $P_\nu = \rho$.  By (ii), $\nu \in \G$ so that by
Theorem~\ref{thm:anosov}, $\pa_{\G} = \rho$.

\medskip \noindent
(c) We combine the two behaviors described in parts (a) and (b).   Assume
$f$ is orientation preserving.  Let $p$ be a fixed point for $f$ and let
$q, q'$ be an orbit of period two.
Let $\mu_p$ denote the point mass at $p$ and let $\nu_q$ denote the
invariant measure supported on $\{ q, q' \}$.
Let $V_q$ denote a neighborhood of $q$ divided into two halves by
$W^s_{\loc}(q)$.  Orienting stable manifolds in an approximately vertical direction,
we label these two halves as $V_q^\ell$ and $V_q^r$ for left and right.  We define
analogous objects for $q'$.

We choose a regular hole $H$ with the
following properties:  (i)  $W^s_{\loc}(p) \subset \partial H$ and the neighborhood
$U$ of $p$ is as described in (a);
(ii) $W^s_{\loc}(q) \cup W^s_{\loc}(q') \subset \partial H$,
$V_q^r \subset H$ and $V_{q'}^\ell \subset H$.
Letting $\lambda_p$ denote the expanding eigenvalue at $p$ and using the same
reasoning as in (a), we see from (i) that $\underline \rho \geq - \log \lambda_p$.
We let $\I' = \I \setminus \{ \mu_p, \nu_q \}$ and choose $\lambda_p$ close enough
to 1 such that $\pa_{\I'} < - \log \lambda_p$ so that
$\pa_{\G} \leq \pa_{\I'} < \underline \rho$.

Since $f$ is orientation preserving, $f(V_q^\ell) \subset H$ and
$f(V_{q'}^r) \subset H$ so that the full measure of $V_q \cup V_{q'}$ has
escaped after one step.  Thus $\overline \rho$ is independent of
$\log \lambda_q$, the expanding Lyapunov exponent on the orbit $q, q'$.  Taking
$\lambda_q$ close enough to 1, we can force
$\overline \rho < - \log \lambda_q \leq \pa_{\I}$.

%%%%%%%%%%%%%%%%%%%%%%%%%%

\subsection{Proof of Corollary~\ref{cor:devil}}

{\em Proof of (a).}
It follows from Theorem~\ref{thm:anosov} and
Proposition~\ref{prop:sequence}
that $\rho(t)$ is locally
constant on an open and full measure set of $t$ since $\rho(t)$ exists and is locally constant
around any $t$ satisfying $d(\partial H_t, \Omega(H_t)) >0$.

\medskip
\noindent
{\em Proof of (b).}
Monotonicity of $\{ H_t \}$ clearly implies monotonicity of both
$\underline \rho_t$ amd $\overline \rho_t$ and their characterization as a devil's staircase
follows since the derivative of each function exists and equals zero on an open subset
of $I$ of full measure by part (a).

\medskip
We assume for the remainder of the proof that $\{ H_t \}_{t \in I}$ forms an increasing sequence
of holes so that $\underline \rho(t)$ and $\overline \rho(t)$ are (nonstrictly)
decreasing functions of $t$.  We prove (d) and (e) first and leave statement (c) for last.

\smallskip
\noindent
{\em Proof of (d).}
Assume $\overline \rho$ is lower semi-continuous at $t_0$.  By part (b), there exists
a sequence $t_n \downarrow t_0$ such that $\overline \rho(t_n) \uparrow \overline \rho(t_0)$,
and by part (a), the $t_n$ can be chosen so that $\rho(t_n)$ exists for each $n$,
i.e. $\underline \rho(t_n) = \overline \rho(t_n)$.  Thus
\[
\overline \rho(t_0) = \lim_{n \to \infty} \overline \rho(t_n) = \lim_{n \to \infty} \underline \rho(t_n)
\leq \underline \rho(t_0)
\]
where in the last inequality we have used monotonicity of $\underline \rho$ since
$t_n > t_0$ implies $H_{t_n} \supset H_{t_0}$ for each $n$.
Since $\underline \rho(t_0) \leq \overline \rho(t_0)$
by definition, we have $\underline \rho(t_0) = \overline \rho(t_0)$ so that $\rho(t_0)$ exists.

\medskip
\noindent
{\em Proof of (e).}
This is similar to part (d) except that we choose a sequence
$t_k \uparrow t_0$ such that $\underline \rho(t_k) \downarrow \underline \rho(t_0)$
at a point where $\underline \rho$ is upper semi-continuous.

\medskip
\noindent
{\em Proof of (c).}
Finally, we show that $\underline \rho(t)$ and $\overline \rho(t)$ can in fact have jumps
at particular values of $t$.
We refer to the examples constructed in the proof of
Proposition~\ref{prop:path}.  All notation is as in that proof.

\smallskip \noindent
{\em Violation of lower semicontinuity.}
Let $H_{t_0}$ be a hole satisfying the requirements of case (a) in the proof of
Proposition~\ref{prop:path}.  For $t < t_0$, take
$H_t$ to have the same
boundary as $H_{t_0}$ except for $W^s_{\loc}(p)$.
Here, we make the boundary of
$H_t$ be a local stable manifold running parallel to $W^s_{\loc}(p)$ lying inside
$U \cap H_{t_0}$
and varying continuously with $t$.
Notice that since $\rho(t_0) = - \log \lambda$ in this example,
shrinking $H_t$ in this way
does not change the escape rate for $t$ close to $t_0$.  Also,
$\Omega(H_t) = \Omega(H_{t_0})$ since $U \cap \Omega(H_t) = \emptyset$.
Thus $\rho(t) = - \log \lambda$ for all
$t \in (t_0 - \ve,  t_0]$
for $\ve$ sufficiently small.

On the other hand, for $t > t_0$, we replace $W^s_{\loc}(p)$ by a local stable
manifold running parallel to $W^s_{\loc}(p)$ lying in $U \setminus H_{t_0}$ so that
$p$ is no longer in $\Omega(H_t)$.  Then
$\I(H_t) = \I_p(H_{t_0})$ so that for some $\delta >0$ and all $t > t_0$,
$\pa_{\I(H_t)}  < - \log \lambda - \delta = \rho(t_0) - \delta$
by construction of $p$.
By Theorem~\ref{thm:anosov}, we have
$\underline \rho(t) \leq \overline \rho(t) \leq \pa_{\I(H_t)}
< \rho(t_0) - \delta$ for all $t > t_0$.

\smallskip \noindent
{\em Violation of upper semicontinuity.}
Let $H_t$ be the same as described in the previous step, except that
$H_{t_0}$ is a hole satisfying the requirements of case (b) in the proof
of Proposition~\ref{prop:path}, i.e., the case when $f$ is orientation reversing.
Now the holes for $t > t_0$ satisfy $\rho(t) = \rho(t_0)$
since the full measure of $U$ disappears in one step for all the $H_t$.
For $t < t_0$, now a neighborhood
of $p$ survives for arbitrarily many steps so that $\lambda$ dominates the escape,
i.e., we choose $\lambda$ so that
$\overline \rho(t) \ge \underline \rho(t) \ge - \log |\lambda| > \rho(t_0) + \delta$ for
some $\delta > 0$.

%%%%%%%%%%%%%%%%%%%%%%%%%%%%%%%%%%%%

%%%%%%%%%%%%%%%%%%%%%%%%%%%%%%%%%%%%%%%
\section{Proof of Theorem~\ref{thm:continuity}}
\label{tower}

We begin by reviewing some facts about Young towers from
\cite{young tower, demers norms}.  We then use these facts to prove
new results on the tower that we use to prove Theorem~\ref{thm:continuity}
in Section~\ref{proof of thm continuity}.

%%%%%%%%%%%%%%%%%%%%%%%%%%%%%%%%%%

\subsection{Generalized horseshoe respecting $H$}
\label{horseshoe}

We recall the notion of a generalized horseshoe with infinitely many branches
and variable return times.  The existence of such a horseshoe leads immediately
to the definition of a Young tower.
We summarize
here only the most important properties and refer the reader to
\cite[Section 1.1]{young tower} for full details.

Following the notation in Section~1.1 of \cite{young tower}, we consider a smooth or
piecewise smooth map $f: M \to M$, and let $\mu$ and $\mu_\gamma$
denote respectively the Riemannian measure on $M$ and on $\gamma$
where $\gamma \subset M$ is a submanifold. We say the pair
$(\Lambda, R)$ defines a {\it generalized horseshoe} if {\bf
(P1)}--{\bf (P5)} below hold (see \cite{young tower} for precise formulation):

\begin{itemize}
\item[{\bf (P1)}] $\Lambda$ is a compact subset of $M$ with a hyperbolic product
structure, {\it i.e.}, $\Lambda = (\cup \Gamma^u) \cap (\cup
\Gamma^s)$ where $\Gamma^s$ and $\Gamma^u$ are continuous families
of local stable and unstable manifolds, and $\mu_{\gamma}\{\gamma
\cap \Lambda\}>0$ for every $\gamma \in \Gamma^u$.
\end{itemize}
A set $A$ is an $s$-subset (resp.\ $u$-subset) of $\Lambda$  if $\gamma \cap A \neq \emptyset$ implies $\gamma \subseteq A$ for any $\gamma \in \Gamma^{s(u)}$.
\begin{itemize}
\item[{\bf (P2)}] $R: \Lambda \to {\mathbb Z}^+$ is a {\it return time function}
to $\Lambda$. Modulo a set of $\mu$-measure zero, $\Lambda$ is the
disjoint union of $s$-subsets $\Lambda_j, j=1,2, \cdots,$ with the
property that for each $j$, $R|_{\Lambda_j}=R_j \in {\mathbb Z}^+$
and $f^{R_j}(\Lambda_j)$ is a $u$-subset of $\Lambda$.
Moreover, for each $n$, the number of
$j$ such that $R_j=n$ is finite.
\end{itemize}
We refer to elements of $\Gamma^{u(s)}$ by $\gamma^{u(s)}$ and
$|\det Df^u|$ denotes
the unstable Jacobian of $f$ with respect to $\mu_{\gamma^u}$.  Denote by
$\gamma^s(x)$ and $\gamma^u(x)$ the stable
and unstable leaves through $x$, respectively.  For $x,y \in \Lambda$, there exists a separation time $s_0(x,y)$, depending only on the unstable coordinate, and numbers $C_0 \geq 1$, $\alpha<1$ independent
of $x, y$, such that the following hold.
\begin{itemize}
\item[{\bf (P3)}] For $y \in \gamma^s(x)$, $d(f^nx,f^ny) \le C_0 \alpha^n d(x,y)$
    for all $n \geq 0$.\footnote{As an
    abstract requirement, (P3) is slightly stronger than the inequality
    $d(f^nx, f^ny) \leq C_0 \alpha^n$ stated in \cite{young tower}.  In practice, however,
    the stronger version holds for all systems for which Young towers have been
    constructed to date.}
\item[{\bf (P4)}] For $y \in \gamma^u(x)$ and $0 \le k \le n < s_0(x,y)$,

(a) $d(f^nx,f^ny) \le C_0 \alpha^{s_0(x,y)-n}$;

(b) $\log \Pi_{i=k}^n \frac{\det Df^u(f^ix)}{\det Df^u(f^iy)} \ \le
\ C_0 \alpha^{s_0(x,y)-n}.$

\item[{\bf (P5)}] (a) For $y \in \gamma^s(x)$,
$\log \Pi_{i=n}^\infty \frac{\det Df^u(f^ix)}{\det Df^u(f^iy)} \ \le
\ C_0 \alpha^n$ for all $n \ge 0.$

(b) For $\gamma, \gamma' \in \Gamma^u$, if $\Theta : \gamma \cap
\Lambda \to \gamma' \cap \Lambda$ is defined by
$\Theta(x)=\gamma^s(x) \cap \gamma'$, then $\Theta$ is absolutely
continuous and $\frac{d(\Theta_*^{-1}\mu_{\gamma'})}{d\mu_\gamma}(x)
\ = \ \Pi_{i=0}^\infty \frac{\det Df^u(f^ix)}{\det Df^u(f^i\Theta
x)}$.
\end{itemize}

The meanings of the last three conditions are as follows: Orbits
that have not ``separated" are related by local hyperbolic
estimates; they also have comparable derivatives. Specifically, {\bf
(P3)} and {\bf (P4)}(a) are (nonuniform) hyperbolic conditions on
orbits starting from $\Lambda$. {\bf (P4)}(b) and {\bf (P5)} treat
more refined properties such as distortion and absolute continuity
of $\Gamma^s$, conditions that are known to hold for
$C^{1+\varepsilon}$ hyperbolic systems.

We say the generalized horseshoe $(\Lambda, R)$ has {\it exponential
return times} if there exist $C>0$ and $\theta>0$ such that for
all $\gamma \in \Gamma^u$, $\mu_\gamma\{R>n\} \le C \theta^n$
for all $n \ge 0$.

\medskip
The setting described above is that of \cite{young tower}; it does not involve
holes. In this setting, we now identify a set $H \subset M$ (to be
regarded later as the hole) and introduce a few relevant
terminologies. Let $(\Lambda, R)$ be a generalized horseshoe for $f$
with $\Lambda \subset (M \setminus H)$.

Recall that $(\Lambda, R)$ {\it respects} $H$ if it satisfies conditions
{\bf (H.1)} and {\bf (H.2)} of
Section~\ref{results}.A.  In particular, {\bf (H.1)} says that for every $i$ and every
$\ell$ with $0 \le \ell \le R_i$, $f^\ell(\Lambda_i)$ either does
not intersect $H$ or is completely contained in $H$.

When constructing the horseshoe which respects the hole in the sense of {\bf (H.1)},
we still keep track of orbits that pass through $H$, i.e. we construct a horseshoe for the
closed dynamical system $(f,M)$ including $\partial H$ as part of the singularity set, but
we do not allow any escape at this stage.  We do this to facilitate comparison between the dynamics of the open system and those of the closed system.

We say the horseshoe $(\Lambda, R)$ is {\it mixing} if g.c.d.$\{R\} = 1$.
 If $(\Lambda, R)$ respects
$H$, let $R_H$ denote the restriction of $R$ to those $\Lambda_i$ which return to $\Lambda$
before entering $H$.  Then when we treat $H$ as a hole, we say the {\it surviving
dynamics} are {\it mixing} if in addition g.c.d.$\{ R_H \} = 1$.

%%%%%%%%%%%%%%%%%%%%%%%%%%%%%%%

\subsection{From generalized horseshoes to Young towers}

It is shown in \cite{young tower} that given a map $f: M \to M$ with a generalized horseshoe
$(\Lambda, R)$, one can associate a Markov extension
$F: \Delta \to \Delta$ which focuses on the return dynamics to $\Lambda$
(and suppresses details between returns) .
We first recall some facts about this very general construction, taking
the opportunity to introduce some notation.

Let
\[
\Delta = \{ (x,n) \in \Lambda \times \mathbb{N} :  n < R(x) \} ,
\]
and define $F: \Delta \to \Delta$ as follows: For
$\ell <R(x)-1$, we let $F(x,\ell) = (x, \ell+1)$, and define
$F(x,R(x)-1) = (f^{R(x)}(x),0)$. Equivalently, one can view
$\Delta$ as the disjoint union
$\cup_{\ell \ge 0} \Delta_\ell$ where
$\Delta_\ell$, the $\ell^{\mbox{\tiny th}}$ level of the tower, is a copy of
$\{x \in \Lambda: R(x)> \ell\}$. This is the representation we will use.
There is a natural projection $\pi: \Delta \to M$ such that $\pi \circ F = f \circ \pi$.
In general, $\pi$ is not one-to-one, but for each $\ell \ge 0$,
it maps $\Delta_\ell$ bijectively onto $f^\ell(\Lambda \cap \{R \ge \ell\})$
if $f$ is invertible.
We say $(F,\Delta)$ is mixing if $(\Lambda, R)$ is mixing.

In the construction of $(\Lambda,R)$, one usually introduces an increasing
sequence of partitions of $\Lambda$ into $s$-subsets representing
distinguishable itineraries in the first $n$ steps.
These partitions
induce a partition $\{ \Delta_{\ell, j} \}$ of $\Delta$ which is finite on
each level $\ell$ and is a (countable) Markov partition for $F$.
We define a separation time $s(x,y) \leq s_0(x,y)$
by $\inf \{ n>0: \mbox{$F^nx, F^ny$ lie in
different $\Delta_{\ell,j}$} \}$.

We borrow the following language from $(\Lambda, R)$ for use
on $\Delta$: For each $\ell,j$, recall that $\Gamma^s(\pi(\Delta_{\ell, j}))$ and
$\Gamma^u(\pi(\Delta_{\ell, j}))$ are the stable and unstable families
defining the hyperbolic product set $\pi(\Delta_{\ell, j})$. We will say
$\tilde \gamma \subset \Delta_{\ell, j}$ is an {\it unstable leaf} of
$\Delta_{\ell, j}$ if $\pi(\tilde \gamma) = \gamma \cap \pi(\Delta_{\ell, j})$
for some $\gamma \in \Gamma^u(\pi(\Delta_{\ell, j}))$, and use
$\Gamma^u(\Delta_{\ell, j})$ to denote the set of all such $\tilde \gamma$.
Let $\Gamma^u(\Delta) = \cup_{\ell,j}
\Gamma^u(\Delta_{\ell, j})$ be the set of all unstable leaves of $\Delta$.
{\it Stable leaves} of $\Delta_{\ell, j}$ and the families $\Gamma^s(\Delta_{\ell, j})$
and $\Gamma^s(\Delta)$ are defined
similarly.

The measures $\mu_\gamma$, $\gamma \in \Gamma^u(\Delta_0)$
are extended to $\Delta_\ell$, $\ell>0$, by defining
$\mu_\gamma(A) = \mu_\gamma(F^{-\ell}A)$ for all measurable $A \subset \Delta_\ell$.
Thus $J_{\mu_\gamma} F$, the Jacobian of $F$ with respect to $\mu_\gamma$,
satisfies $J_{\mu_\gamma}F \equiv 1$ except at return times.
Let $J^u\pi$ denote the Jacobian of $\pi$ with respect to the measures
$\mu_\gamma$ on $\Delta$ and $M$ respectively.  Then
$F$ enjoys properties {\bf (P3)}-{\bf (P5)} at return times due to the
identity $J^u\pi(F) J_{\mu_\gamma}F = |\det Df^u(\pi)| J^u\pi$ and the fact that $J^u\pi \equiv 1$
on $\Delta_0$.

%%%%%%%%%%%%%%%%%%%%%%%%%%%%%%%%%%

\subsubsection{A reference measure on $\Delta$}
\label{good measure}

In each $\dlj$ we choose a representative leaf
$\hat{\gamma} \in \Gamma^u(\dlj)$.  For any $\gamma \in \Gamma^u(\dlj)$, let
$\Theta_{\gamma,\hat{\gamma}} : \gamma \to \hat{\gamma}$
denote the holonomy map along
$\Gamma^s$-leaves, i.e.\
$\Theta_{\gamma, \hat{\gamma}}(x) = \gamma^s(x) \cap \hat{\gamma}$.
It will be convenient to define a new reference measure along unstable leaves,
$m_\gamma$, by $dm_\gamma = \phi d\mu_\gamma$ where
$\phi(x)= \prod_{i=0}^{\infty} \frac{J_{\mu_\gamma}F(F^ix)}
                 {J_{\mu_\gamma}F(F^i(\Theta_{\gamma, \hat{\gamma}} x))}$.
Given $\gamma' \in \Gamma^u(\Delta_0)$, if $\gamma \in \Gamma^u(\dlj)$
satisfies $F(\gamma \cap S) = \gamma'$ for some $s$-subset $S$, then for
$x \in \gamma \cap S$, define
$J_\gamma F(x) = \frac{d(m_{\gamma'}\circ F)}{dm_\gamma}$.
Elsewhere on $\Delta$, $J_\gamma F \equiv 1$.
Similarly, one defines $J_\gamma F^R(x)$ whenever $F^R(\gamma \cap S)
= \gamma'$ for some $s$-subset $S$.
For convenience, we restate Lemma 1 from \cite{young tower}, which summarizes the
important properties of $m_\gamma$.

\begin{lemma}{\cite{young tower}}
\label{lem:jacobian}
Let $\gamma$, $\gamma' \in \Gamma^u(\dlj)$.
\begin{itemize}
  \item[(1)] Let $\Theta_{\gamma, \gamma'}: \gamma \rightarrow \gamma'$
be the holonomy map along $\Gamma^s$-leaves as above.
Then $\Theta_*m_\gamma = m_{\gamma'}$.
  \item[(2)] $J_\gamma F(x) = J_{\gamma'} F(y)$, $\forall x \in \gamma$, $ y \in \gamma^s(x) \cap \gamma'$.
  \item[(3)] $\exists C_1>0$ such that
    $\forall  x,y \in \gamma$ with $s_0(x,y) \geq R(x)$,
    $ \left| \frac{J_\gamma F^R(x)}{J_\gamma F^R(y)} - 1\right|
    \leq C_1 \alpha^{s(F^Rx, F^Ry)/2}$ . \vspace{-6 pt}
\end{itemize}
Moreover by (P5)(a),
$e^{-C_0} \leq \phi \leq e^{C_0}$ .
\end{lemma}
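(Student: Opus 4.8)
The plan is to follow the proof of Lemma~1 in \cite{young tower}, tracking the passage from $f:M\circlearrowleft$ to $F:\Delta\circlearrowleft$. Everything hinges on one reformulation of the definition of $m_\gamma$: for $\gamma\in\Gamma^u(\dlj)$, the measure $m_\gamma$ is precisely the pull-back, under the stable holonomy $\Psi_\gamma:=\Theta_{\gamma,\hat\gamma}:\gamma\to\hat\gamma$, of the reference measure $\mu_{\hat\gamma}$ on the representative leaf. This is immediate once we use that $F$ inherits \textbf{(P3)}--\textbf{(P5)} at return times (via $J^u\pi(F)\,J_{\mu_\gamma}F=|\det Df^u(\pi)|\,J^u\pi$ and $J^u\pi\equiv1$ on $\Delta_0$): indeed \textbf{(P5)(b)} on $\Delta$ says exactly that $\Psi_\gamma$ transports $\mu_{\hat\gamma}$ to the measure whose $\mu_\gamma$-density is $\prod_{i\ge0}\frac{J_{\mu_\gamma}F(F^ix)}{J_{\mu_\gamma}F(F^i\Psi_\gamma x)}$, which is $\phi$; the product converges because $\Psi_\gamma x\in\gamma^s(x)$ and its tail from index $n$ on is $\le C_0\alpha^n$ in absolute value by \textbf{(P5)(a)}. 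Reading that bound at $n=0$ gives the closing assertion $e^{-C_0}\le\phi\le e^{C_0}$ for free.

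Parts (1) and (2) are then formal. For (1) I would use the cocycle identity $\Psi_{\gamma'}\circ\Theta_{\gamma,\gamma'}=\Psi_\gamma$ (both maps slide a point of $\gamma$ along its stable leaf onto $\hat\gamma$); since $(g\circ h)^{*}=h^{*}g^{*}$ for pull-backs of measures and $\Theta_{\gamma,\gamma'}$ is a bijection, $\Theta_{\gamma,\gamma'\,*}m_\gamma=\Theta_{\gamma,\gamma'\,*}\Theta_{\gamma,\gamma'}^{*}\Psi_{\gamma'}^{*}\mu_{\hat\gamma}=\Psi_{\gamma'}^{*}\mu_{\hat\gamma}=m_{\gamma'}$. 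For (2), write $\gamma_1$ and $\gamma_1'$ for the images of the relevant return $s$-subset; these lie in a common box of $\Delta_0$, and stable holonomies are $F$-equivariant, so $F\circ\Theta_{\gamma,\gamma'}=\Theta_{\gamma_1,\gamma_1'}\circ F$ there. Plugging this, the change-of-variables definition of $J_\gamma F$, and part (1) applied on both boxes into $m_{\gamma_1}(FA)=\int_A J_\gamma F\,dm_\gamma$ gives $\int_A (J_{\gamma'}F\circ\Theta_{\gamma,\gamma'})\,dm_\gamma=\int_A J_\gamma F\,dm_\gamma$ for all measurable $A\subseteq\gamma$, hence $J_\gamma F(x)=J_{\gamma'}F(\Theta_{\gamma,\gamma'}x)=J_{\gamma'}F(y)$; on the non-return levels $\ell<R_j-1$ both Jacobians are identically $1$ and there is nothing to check.

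The genuinely quantitative statement, and the step I expect to be the main obstacle, is (3). First I would unravel the definitions to get the factorization
\[
\frac{J_\gamma F^R(x)}{J_\gamma F^R(y)}=\frac{J_{\mu_\gamma}F^R(x)}{J_{\mu_\gamma}F^R(y)}\cdot\frac{\phi_{\gamma_1}(F^Rx)}{\phi_{\gamma_1}(F^Ry)}\cdot\frac{\phi_\gamma(y)}{\phi_\gamma(x)},
\]
where $\gamma_1=F^R(\gamma\cap S)$ and $\phi_\gamma,\phi_{\gamma_1}$ are the densities from the construction of $m_\gamma,m_{\gamma_1}$. The hypothesis $s_0(x,y)\ge R(x)$ forces $x,y$ into a common return $s$-subset, so $R:=R(x)=R(y)$ and $s_0(F^Rx,F^Ry)=s_0(x,y)-R\ge s(F^Rx,F^Ry)$. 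The first factor equals $|\det D(f^R)^u(\pi x)|/|\det D(f^R)^u(\pi y)|$ and is controlled directly by \textbf{(P4)(b)} (with $k=0$, $n=R-1<s_0(x,y)$), yielding a logarithmic bound $\le C_0\alpha^{\,s_0(x,y)-R+1}$. Each of the remaining two factors is a ratio of the holonomy density at two points of one unstable leaf whose separation time is $s(F^Rx,F^Ry)$ --- here one uses that $s_0$ depends only on the unstable coordinate, so holonomies preserve separation times. To estimate such a ratio I would write $\log\phi$ as the telescoping sum $\sum_{i\ge0}\bigl(\log J_{\mu_\gamma}F(F^i\cdot)-\log J_{\mu_\gamma}F(F^i\Psi\cdot)\bigr)$, split the index range at about half the separation time, bound the low-index terms using the hyperbolic contraction \textbf{(P3)}, \textbf{(P4)(a)} together with the distortion bound \textbf{(P4)(b)}, and bound the high-index tail by \textbf{(P5)(a)}; optimizing the split is exactly what produces the exponent $\tfrac{1}{2}$. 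Finally, applying $|e^{u}-1|\le C|u|$ to the small logarithmic bounds and collecting $C_0$, $\alpha$, and the geometric-series constants into a single $C_1$ completes the proof.
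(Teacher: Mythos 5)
The paper does not actually prove this statement: it is restated verbatim from Lemma~1 of \cite{young tower} (``For convenience, we restate Lemma~1 from \cite{young tower}\ldots''), and the paper simply cites that source. Your proposal is therefore a reconstruction of Young's original argument, and it is essentially correct and follows the same route Young does. The organizing observation --- that $m_\gamma$ is precisely $\Theta_{\gamma,\hat\gamma}^{-1}{}_{*}\mu_{\hat\gamma}$, with density $\phi$ given by the tower version of {\bf (P5)(b)} --- is the right reformulation: it makes (1) a one-line computation via the cocycle identity $\Psi_{\gamma'}\circ\Theta_{\gamma,\gamma'}=\Psi_\gamma$, makes (2) a formal consequence of (1) plus $F$-equivariance of stable holonomy, and immediately gives the bound $e^{-C_0}\le\phi\le e^{C_0}$ by reading {\bf (P5)(a)} at $n=0$. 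For (3), your factorization of $J_\gamma F^R(x)/J_\gamma F^R(y)$ into the Jacobian ratio (controlled by {\bf (P4)(b)}) times two holonomy-density ratios, followed by the split of the telescoping series for $\log\phi$ at roughly half the separation time and the use of {\bf (P5)(a)} for the tail, is exactly the mechanism that produces the exponent $1/2$.

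Two small bookkeeping points that you should firm up if you write this out in full. First, when passing the tower Jacobian products back to products of $\det Df^u$ via $J^u\pi(F)\,J_{\mu_\gamma}F=|\det Df^u(\pi)|\,J^u\pi$, check that the boundary term $J^u\pi(F^{n+1}\cdot)$ is controlled (it equals $1$ along the subsequence of return times, which is what makes the infinite products on $\Delta$ and $M$ agree). Second, be careful with which notion of separation time appears where: you write $s_0(F^Rx,F^Ry)=s_0(x,y)-R$, whereas $s_0$ (defined on $\Lambda$) only decreases by one per application of the return map, so the correct relation is an inequality $s(F^Rx,F^Ry)\le s_0(F^Rx,F^Ry)$ together with $s(x,y)=s(F^Rx,F^Ry)+R$ on the tower; the end estimate $\le C_1\alpha^{s(F^Rx,F^Ry)/2}$ survives either way since the bounds you obtain in terms of $s_0$ are stronger, but the stated equality as written is not right. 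Neither issue affects the soundness of the overall argument.
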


On $\Delta_0$,  we choose a transverse measure $m^s$ on $\Gamma^u(\Delta_0)$
normalized so that $m^s(\Gamma^u(\Delta_0))=1$.  Using the fact that
$F: \Delta_\ell \to \Delta_{\ell + 1}$ is simply rigid translation, we extend $m^s$
to each $\Gamma^u(\dlj)$.
We define $m$ to be the measure with factor measure $m^s$ and
measures $m_\gamma$ on unstable leaves.
Notice that in any $\dlj$, Lemma~\ref{lem:jacobian}(1) implies that
$m_\gamma(S) = m(S)$ for any s-subset $S \subseteq \dlj$ and
$\gamma \in \Gamma^u(\dlj)$.  This feature of $m_\gamma$
implies that $m$ is a product measure on each $\dlj$.
When disintegrating $m$ on a particular $\dlj$, we maintain the convention that
$m^s$ is normalized, but $m_\gamma$ is not.

\begin{remark}
\label{rem:quotient}
At this point in the application of Young towers, it is usual to define the quotient
tower $\bDelta = \Delta/ \!\! \sim$ where $x \sim y$ if $y \in \gamma^s(x)$.  The resulting
quotient system $(\barF, \bDelta)$ is expanding, allowing one to bring to bear the
usual analysis of the transfer operator for expanding systems.  However, since this
requires the extra step of lifting our results from $\bDelta$ to $\Delta$ before projecting
down to $M$, we find it simpler to work with the hyperbolic transfer operator
on $\Delta$ directly, which we do below.  Our approach also yields stronger results regarding
the H\"older continuity of the quasi-invariant measures.
\end{remark}

%%%%%%%%%%%%%%%%%%%%%%%%%%

\subsubsection{Transfer operator}

We define a metric along stable leaves which makes the distance between unstable
leaves uniform.  Fix $x \in \Delta_0$ and let
$y \in \gamma^s(x)$.   Let $\Theta: \gamma^u(x) \to \gamma^u(y)$ be the sliding
map along stable leaves as above.  Define
$d_s(x,y) := \sup_{z \in \gamma^u(x)} d(z, \Theta z) $.
We extend this metric to $\Delta_\ell$, $\ell>1$, by setting
$d_s(F^\ell x,F^\ell y) = \alpha^\ell d_s(x,y)$
for all $\ell < R(x)$ and $y \in \gamma^s(x)$.
By {\bf (P3)},
\begin{equation}
\label{eq:ds contract}
d_s(F^nx, F^ny) \leq C_0 \alpha^n d_s(x,y)
\qquad
\mbox{for all $n \geq 0$ whenever  $y \in \gamma^s(x)$.}
\end{equation}

The class of test functions we use are required to be smooth along stable
leaves only.  Let $\Fb$ denote the set of
bounded measurable functions on $\Delta$.
For $\vf \in \Fb$ and
$0<r\leq 1$, define
\[
K^r_s(\vf) = \sup_{\gamma^s \in \Gamma^s(\Delta)} K^r(\vf|_{\gamma^s})
\; \; \; \mbox{where} \; \; \;
K^r(\vf|_{\gamma^s}) = \sup_{x,y \in \gamma^s} |\vf(x)-\vf(y)|\,d_s(x,y)^{-r} .
\]
If $A$ is an $s$-subset of $\Delta$, we define
$|\vf|_{\C^r_s(A)} = \sup_{\gamma^s \subset A}
|\vf|_{\C^0(\gamma^s)} + K^r(\vf|_{\gamma^s})$
and let
$\C^r_s(A) = \{ \vf \in \Fb: |\vf|_{\C^r_s(A)} < \infty \} $.

For $h \in (\C^r_s(\Delta))'$ an element of the dual of $\crs(\Delta)$,
the transfer operator $\Lp: (\crs(\Delta))' \to (\crs(\Delta))'$ is defined by
\[
\Lp h (\vf) = h(\vf \circ F) \qquad \mbox{for each $\vf \in \C^r_s(\Delta)$} .
\]
When $h$ is a measure absolutely continuous with respect to
the reference measure $m$, we shall
call its $L^1(m)$ density $h$ as well.
Hence $h(\vf) = \int_\Delta \vf h \, dm$.
With this convention, $L^1(m) \subset (\crs(\Delta))'$ and one can
restrict $\Lp$ to $L^1(m)$.  In this case,
\[
\Lp^n h (x) = \sum_{y \in F^{-n}x} h(y) (J_m F^n(y))^{-1}
\]
for each $n \geq 0$
where $J_mF^n$ is the Jacobian of $F^n$ with respect to $m$.

Along unstable leaves, we define the metric $d_u(x,y) = \beta_0^{s(x,y)}$
for $y \in \gamma^u(x)$ and some $\beta_0 <1$ to be chosen later.
Let Lip$^u(\vf|_{\gamma})$ denote the
Lipschitz constant of a function $\vf$ along $\gamma \in \Gamma^u$ with respect to
$d_u(\cdot, \cdot)$ and
define Lip$^u(\vf) = \sup_{\gamma \in \Gamma^u(\Delta)}
\mbox{Lip}^u(\vf|_\gamma)$.
We define Lip$^u(\Delta) = \{ \vf \in \Fb : \mbox{Lip}^u(\vf) < \infty \}$.

%%%%%%%%%%%%%%%%%%%%%%%%%%

\subsection{Definition of norms}
\label{norms}

We recall norms constructed in \cite{demers norms}
on which the transfer operator $\Lp$ has a spectral gap.  We will show here
that this spectrum is robust
under perturbations in the form of small holes in $\Delta$.
We assume throughout that $(F,\Delta)$ has exponential return times, i.e.\
there exist constants $C>0$, $\theta < 1$ such that $m(\Delta_\ell) \leq C\theta^\ell$.

Let $\pa = \{ \dlj \}$ denote the Markov partition for $F$.
For each $k \geq 0$, define $\pa^k = \bigvee_{i=0}^k F^{-i}\pa$ and let
$\pa^k_{\ell,j} = \pa^k | \dlj$.  The elements $E \in \pa^k_{\ell,j}$ are
$k$-cylinders which are $s$-subsets of $\dlj$.
For $\psi \in L^1(m)$ and $E \in \pa^k$, define
\[
\fint_E \psi \, dm = \frac{1}{m(E)} \int_E \psi \, dm.
\]
Now choose $0 < q < p \leq 1$ and fix $1 > \beta_0 > \max\{\theta, \sqrt{\alpha} \}$
where  $\alpha$ is from {\bf (P3)}.
Next, choose
$1 > \beta \geq \max\{ \beta_0^{(p-q)/p}, \alpha^q \}$.

For $h \in \mbox{Lip}^u(\Delta)$, define the {\em weak norm} of $h$ by
$|h|_w = \sup_{\ell,j,k} |h|_{w(\pklj)}$ where
\begin{equation}
\label{eq:weak}
|h|_{w(\pklj)} = \beta_0^\ell  \sup_{E \in \pa^k_{\ell,j}} \sup_{|\vf|_{\C^p_s(E)} \leq 1}
           \fint_E h \, \vf \, dm .
\end{equation}

Define the {\em strong stable norm} of $h$ by $\|h\|_s = \sup_{\ell,j,k} \|h\|_{s(\pklj)}$ where
\begin{equation}
\label{eq:s-stable}
\|h\|_{s(\pklj)} = \beta^\ell \sup_{E \in \pa^k_{\ell,j}} \sup_{|\vf|_{\C^q_s(E)} \leq 1}
           \fint_E h \, \vf \, dm .
\end{equation}

For $\vf \in \C^p_s(\Delta)$, define $\vf_E$ on $E \in \pklj$ by
$\vf_E(x) = m(E)^{-1} \int_{\gamma^u(x) \cap E} \vf \, dm_\gamma$, for $x \in E$.
Let $\tvf_E(x) = \vf_E(\gamma^u(x))$ for $x \in \dlj$ be the extension of $\vf_E$
to $\dlj$.  Note that $\tvf_E$ is well-defined since $\vf_E$ is constant on
unstable leaves.  In what follows, let $E_k \in \pklj$, $E_r \in \pa^r_{\ell,j}$ for $r \geq k$.

We define the {\em strong unstable norm} of $h$ by
$\|h\|_u = \sup_{\ell,j,k} \|h\|_{u(\pklj)}$ where
\begin{equation}
\label{eq:s-unstable}
\|h\|_{u(\pklj)} = \sup_{E_k \in \pa^k_{\ell,j}} \sup_{E_r \subset E_k}
        \sup_{|\vf|_{\C^p_s(E_r)} \leq 1}
           \beta^{\ell -k} \left| \fint_{E_r} h \,  \vf  \, dm
           - \fint_{E_k} h \, \tvf_{E_r} \, dm \right|  .
\end{equation}

The {\em strong norm} of $h$ is defined as $\|h\| = \|h\|_s + b \|h\|_u$, for
some $b>0$ to be chosen later.

We denote by $\B$ the completion of Lip$^u(\Delta)$ in the $\|\cdot \|$-norm and by
$\B_w$ the completion of Lip$^u(\Delta)$ in the $|\cdot |_w$ norm.

%%%%%%%%%%%%%%%%%%%%%%%%%%%%%%%

\subsection{Known spectral picture for $\Lp: \B \circlearrowleft$}

The following proposition is \cite[Proposition 1.3]{demers norms}

\begin{proposition}\cite{demers norms}
\label{prop:ly}
Suppose $(F,\Delta)$ satisfies properties {\bf (P1)}-{\bf (P5)} and has exponential
return times.  Then
there exists $\bar{C}>0$ such that for each $h \in \B$ and $n \geq 0$,
\begin{eqnarray}
|\Lp^nh|_w & \leq & \bar{C} |h|_w    \label{eq:weak norm est} \\
\|\Lp^nh\|_s & \leq & \bar{C} \beta^n \|h\|_s + \bar{C} |h|_w   \label{eq:stable norm est} \\
\|\Lp^nh\|_u & \leq & \bar{C} \beta^n \|h\|_u + \bar{C} \|h\|_s  \label{eq:unstable norm est}
\end{eqnarray}
\end{proposition}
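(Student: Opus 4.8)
The plan is to prove the three inequalities directly for every $n\ge 0$ with a single constant $\bar C$ --- iterating an $n=1$ estimate is not an option here, since $\beta$ is not assumed small --- by dualizing $\Lp^n$ and decomposing the $F^n$-preimages of a cylinder across the levels of the tower. It suffices to treat $h\in\mathrm{Lip}^u(\Delta)$ and then pass to $\B$ (for the weak estimate, to $\B_w$) by density. Fix a $k$-cylinder $E\in\pklj$ at level $\ell$ and a test function $\vf$ of the class appropriate to the norm under consideration. Writing $\fint_E \Lp^n h\,\vf\,dm = m(E)^{-1}\int_{F^{-n}E} h\,(\vf\circ F^n)\,dm$, decompose $F^{-n}E=\bigsqcup_i E_i$ into $(k+n)$-cylinders $E_i\in\pa^{k+n}_{\ell_i,j_i}$, each an $s$-subset on which $F^n$ maps bijectively onto $E$. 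Organize the $E_i$ by level $\ell_i$: if $\ell\ge n$ there is exactly one preimage, at level $\ell-n$, on which $F^n$ is the rigid upward translation and hence preserves $m$ up to the factor $e^{\pm C_0}$ coming from $\phi$; if $\ell<n$, the preimage chain descends to $\Delta_0$ in $\ell$ steps and the return dynamics then creates branches at every level $\ell_i\ge 0$, and in this case $m(E)^{-1}\sum_{i:\ell_i=\ell'}m(E_i)$ is bounded (up to a constant) by $\theta^{\,n-\ell}\theta^{\ell'}$, using the exponential return-time hypothesis $m(\Delta_\ell)\le C\theta^\ell$ together with the bounded-distortion estimate of Lemma~\ref{lem:jacobian}(3) and $e^{-C_0}\le\phi\le e^{C_0}$.

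The weak estimate is then immediate: by \eqref{eq:ds contract}, on each $E_i$ one has $|\vf\circ F^n|_{\C^p_s(E_i)}\le |\vf|_{\C^0}+C_0^p\alpha^{pn}K^p_s(\vf)\le C$, so pairing against $|h|_{w(\pa^{k+n}_{\ell_i,j_i})}\le|h|_w$ and summing $\beta_0^\ell\,m(E)^{-1}\sum_i m(E_i)\beta_0^{-\ell_i}$ over the two cases above gives a finite bound, since $\theta<\beta_0$ and $\beta_0^{\,n}\le 1$; hence $|\Lp^n h|_w\le\bar C|h|_w$. For the strong stable estimate, the seminorm genuinely contracts: $K^q_s(\vf\circ F^n)\le C_0^q\alpha^{qn}K^q_s(\vf)$ by \eqref{eq:ds contract}. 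When $\ell\ge n$ one pairs the single preimage directly against $\|h\|_{s(\pa^{k+n}_{\ell-n,j})}\le\|h\|_s$, and the prefactor is exactly $\beta^\ell\beta^{-(\ell-n)}=\beta^n$; when $\ell<n$ one splits $\vf\circ F^n$ on each $E_i$ as its stable-holonomy-invariant representative (constant on stable leaves, of $\C^p_s$-norm $\le 1$, paired against $|h|_w$ via $\beta_0^{-\ell_i}$) plus a remainder of $\C^q_s$-norm $O(\alpha^{qn})$ (paired against $\|h\|_s$ via $\beta^{-\ell_i}$). Using $\alpha^q\le\beta$, $\theta<\beta$ and $\theta<\beta_0$, the level sums converge and collapse to $\bar C\beta^{\,n}\|h\|_s+\bar C|h|_w$.

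For the strong unstable estimate the same decomposition is used, but now one must control, for nested cylinders $E_r\subset E_k$ whose $F^n$-preimages are $E_{r,i}\subset E_{k,i}$, the quantity $\fint_{E_{r,i}}h\,(\vf\circ F^n)-\fint_{E_{k,i}}h\,\widetilde{(\vf\circ F^n)}_{E_{r,i}}$ against the analogous quantity for $h$ at the scale $E_{r,i}\subset E_{k,i}$. Two discrepancies obstruct an exact $\beta^n$-contraction and are precisely what produces the $\|h\|_s$-term: first, $\widetilde{(\vf\circ F^n)}_{E_{r,i}}$ differs from $(\vf_{E_r})\circ F^n$ because $F^n$ distorts the measures $m_\gamma$ along unstable leaves, the defect being governed by the non-constancy of the unstable Jacobian, which by Lemma~\ref{lem:jacobian}(3) is Lipschitz in the $d_u$-metric exactly because $\beta_0>\sqrt\alpha$, and whose contribution is absorbed into $\|h\|_s$; second, comparing $\C^p_s$ test functions at the fine scale with their unstable averages requires interpolating between the $\C^p$ and $\C^q$ scales in the $d_u$-metric, which is where $\beta\ge\beta_0^{(p-q)/p}$ and $\beta\ge\alpha^q$ enter. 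Summing over levels as before yields $\|\Lp^n h\|_u\le\bar C\beta^{\,n}\|h\|_u+\bar C\|h\|_s$. This last estimate is the crux: keeping the distortion of $J_mF^n$ and the change of the unstable-averaging operator under $F^n$ simultaneously under control, with all Hölder exponents matched against the prescribed relations among $\alpha,\theta,\beta_0,\beta,p,q$, is where the real work lies, whereas the weak and stable bounds are comparatively routine consequences of the stable contraction \eqref{eq:ds contract} and the exponential return times.
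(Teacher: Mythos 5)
The paper does not prove Proposition~\ref{prop:ly}; it is quoted verbatim as Proposition~1.3 of \cite{demers norms} and used as a black box, so there is no in-paper proof to compare against. That said, your overall architecture is the standard one for Lasota--Yorke estimates on Young towers and is consistent with how \cite{demers norms} proceeds: a direct $n$-step estimate by dualizing $\Lp^n$, decomposing $F^{-n}E$ into $(k+n)$-cylinders organized by level, summing the branch contributions via the exponential tail $m(\Delta_\ell)\lesssim\theta^\ell$ together with bounded distortion, and extracting the $\beta^n$ contraction in the strong norms from stable contraction $d_s(F^nx,F^ny)\lesssim\alpha^n d_s(x,y)$ via the splitting of the pulled-back test function into a stable-holonomy-constant part (paired against $|h|_w$) plus a $\C^q_s$-small remainder (paired against $\|h\|_s$). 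The role you assign to the relations $\beta_0>\sqrt\alpha$ (to make the Jacobian ratio in Lemma~\ref{lem:jacobian}(3) Lipschitz for $d_u$) and $\beta\ge\max\{\beta_0^{(p-q)/p},\alpha^q\}$ is also correct.

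Two caveats. First, the claimed per-level bound $m(E)^{-1}\sum_{i:\ell_i=\ell'}m(E_i)\lesssim\theta^{\,n-\ell}\theta^{\ell'}$ cannot be right as stated: since $F$ preserves $m$ up to uniformly bounded Jacobian, $\sum_{\ell'}m(E)^{-1}\sum_{\ell_i=\ell'}m(E_i)=m(E)^{-1}m(F^{-n}E)\asymp 1$, which the extra factor $\theta^{\,n-\ell}$ would contradict for $n\gg\ell$. The correct statement is $\lesssim\theta^{\ell'}$ with no decay in $n$; this is all you actually use (the weak estimate needs no $n$-decay, and in the strong estimates the $\beta^n$ comes from the test-function contraction and the $\beta^\ell/\beta^{\ell-n}$ prefactor, not from the mass distribution). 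Second, the strong unstable inequality \eqref{eq:unstable norm est} is where the substance lies, and you have only named the two sources of error (distortion of $J_m F^n$ along unstable leaves, and the mismatch between $\widetilde{(\vf\circ F^n)}_{E_{r,i}}$ and $\vf_{E_r}\circ F^n$) without actually carrying out the telescoping that puts one piece against $\|h\|_u$ with prefactor $\beta^n$ and the other against $\|h\|_s$. As it stands this is a plausible plan, not a proof, of the hardest of the three inequalities.
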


For any $1>\tau> \beta$, there exists
$N \geq 0$ such that $2\bar{C} \beta^N < \tau^N$.  Choose $b = \beta^N$.  Then,
\[
\| \Lp^N h\| =  \| \Lp^N h\|_s + b \| \Lp^N h\|_u
        \leq  \bar{C} \beta^N (\|h\|_s + b \|h\|_u) + b\bar{C} \|h\|_s + \bar{C} |h|_w
                  \leq \tau^N \|h\| + \bar{C} |h|_w .
\]
The above represents the
traditional Lasota-Yorke inequality.  By \cite[Lemma~2.6]{demers norms},
the unit ball of $\B$ is relatively compact in $\B_w$, so it follows from standard
arguments that the essential spectral radius
of $\Lp$ on $\B$ is bounded by $\beta$ (see e.g. \cite{baladi, hennion}).

\begin{thm}\cite[Theorems 1, 2]{demers norms}
\label{thm:spectral}
The operator $\Lp: \B \circlearrowleft$ is quasi-compact with essential spectral
radius bounded by $\beta$.  If $F$ is mixing, then $\Lp$ has a spectral gap and
there is a unique invariant probability measure $\tnu_{\mbox{\tiny SRB}} \in \B$ with Lipschitz densities
on $\gamma \in \Gamma^u(\Delta)$.  In addition, $\tnu_{\mbox{\tiny SRB}}$ projects to the SRB measure
for $f$, i.e.\ $\pi_*\tnu_{\mbox{\tiny SRB}} = \nusrb$.
\end{thm}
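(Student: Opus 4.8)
The plan is to follow the standard route: combine the Lasota--Yorke inequality with the compact embedding to get quasi-compactness, extract the leading eigenvector, use the mixing hypothesis to reduce the peripheral spectrum to the simple eigenvalue $1$, and finally identify the projected measure. Since the statement is \cite[Theorems 1 and 2]{demers norms}, all the analytic ingredients have already been assembled above. First I would establish quasi-compactness: the inequality $\| \Lp^N h\| \le \tau^N \|h\| + \bar{C} |h|_w$ displayed just before the theorem, the uniform weak bound \eqref{eq:weak norm est}, and the relative compactness of the unit ball of $\B$ in $\B_w$ (\cite[Lemma 2.6]{demers norms}) are precisely the hypotheses of the abstract Hennion criterion (\cite{hennion}; see also \cite{baladi}). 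This yields at once that $\Lp:\B\circlearrowleft$ has essential spectral radius at most $\tau$ for every $\tau>\beta$, hence at most $\beta$, with only finitely many eigenvalues of finite algebraic multiplicity outside the closed disk of radius $\beta$.

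Next I would show that $1$ is an eigenvalue and that the spectral radius equals $1$. The constant density $h_0\equiv 1$ lies in $\mbox{Lip}^u(\Delta)\subset\B$, with all three norms finite because $m(\Delta_\ell)\le C\theta^\ell$. Since $\Lp$ preserves $\int\cdot\,dm$ on $L^1(m)$, one has $\int\Lp^n h_0\,dm=m(\Delta)>0$ for all $n$, while the Lasota--Yorke inequality together with \eqref{eq:weak norm est} keeps $\{\Lp^n h_0\}_{n\ge 0}$ bounded in $\B$. By the compact embedding, a subsequence of the Ces\`aro averages $\frac1n\sum_{i=0}^{n-1}\Lp^i h_0$ converges in $\B_w$ to some $\tnusrb\in\B$ with $\Lp\tnusrb=\tnusrb$ and $\int\tnusrb\,dm=m(\Delta)\ne 0$, which we normalize to a probability density; since $\Lp$ acts as an $L^1(m)$ contraction on non-negative densities, its spectral radius on $\B$ is exactly $1$.

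The key step, and the only one that uses the mixing hypothesis g.c.d.$\{R\}=1$, is upgrading quasi-compactness to a genuine spectral gap. Here I would argue in the usual way: for a peripheral eigenfunction $g\in\B$ with eigenvalue $e^{i\theta}$, positivity of $\Lp$ gives $|\Lp^n g|\le\Lp^n|g|$ and hence a bound $|g|\le c\,\tnusrb$, so $g$ is supported on $\{\tnusrb>0\}$; the relation $\Lp^n g=e^{in\theta}g$ combined with the topological mixing of $F$ (equivalent to g.c.d.$\{R\}=1$) then forces $\theta=0$ and $g$ to be a scalar multiple of $\tnusrb$, and the same comparison rules out generalized eigenvectors at $1$. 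This gives simplicity of the eigenvalue $1$ and confinement of the rest of the spectrum to a disk of radius $r<1$. I expect this to be the main obstacle, since it is the one place where genuinely dynamical (rather than purely functional-analytic) input is required.

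Finally I would record the two remaining conclusions. For Lipschitz regularity of $\tnusrb$ along unstable leaves, one can either read the strong stable and unstable norms \eqref{eq:s-stable}--\eqref{eq:s-unstable} as a bound on the variation of the disintegration of $\tnusrb$ on each $\gamma\in\Gamma^u(\Delta)$ with respect to the metric $d_u(x,y)=\beta_0^{s(x,y)}$, or, more robustly, check from the formula $\Lp^n h(x)=\sum_{y\in F^{-n}x}h(y)(J_mF^n(y))^{-1}$, property {\bf (P4)}(b), and Lemma~\ref{lem:jacobian}(3) that $\Lp$ maps $\B$ into $\mbox{Lip}^u(\Delta)$, so that the fixed point $\tnusrb=\Lp\tnusrb$ inherits Lipschitz densities on unstable leaves. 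For the projection, $\pi_*\tnusrb$ has finite total mass $\int_\Delta\tnusrb\,dm<\infty$ by the exponential tails and the weak-norm bound, and after normalization is a probability measure; it is $f$-invariant since $f\circ\pi=\pi\circ F$ and $F_*\tnusrb=\tnusrb$. By the construction of $m$ as a product measure with transverse factor $m^s$ and conditionals $m_\gamma$ on unstable leaves, together with $J^u\pi\equiv 1$ on $\Delta_0$, the conditional measures of $\pi_*\tnusrb$ on unstable manifolds of $f$ are absolutely continuous with respect to $\mu_{\gamma^u}$. An $f$-invariant measure with absolutely continuous conditionals on unstable manifolds is an SRB measure, so by the assumed uniqueness of $\nusrb$ we conclude $\pi_*\tnusrb=\nusrb$.
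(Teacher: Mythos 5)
Your overall outline is the same standard route the paper has in mind: the Lasota--Yorke inequality displayed just before the theorem plus the compact embedding of $\B$ into $\B_w$ (cited to \cite[Lemma 2.6]{demers norms}) give quasi-compactness via Hennion's theorem, and everything else is delegated by the paper to \cite[Theorems 1, 2]{demers norms}, so there is no further in-text proof to compare against. On the cited parts your sketch is largely consistent with how such arguments go, but two steps are glossed over in a way that would not survive a careful write-up.

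First, the claim that ``$\Lp$ maps $\B$ into $\mbox{Lip}^u(\Delta)$'' is too strong and, as stated, false: $\B$ is the completion of $\mbox{Lip}^u(\Delta)$ in the weaker $\|\cdot\|$-norm and genuinely contains distributions, and the tower map has no unstable smoothing mechanism that would turn an arbitrary element of $\B$ into a Lipschitz function in one step. The correct route (and the one your first alternative is gesturing at) is to exploit that the leading eigenvector is obtained as a $\B_w$-limit of (Ces\`aro averages of) $\Lp^n h_0$ with $h_0 \equiv 1 \in \mbox{Lip}^u(\Delta)$, and to show that the $\mbox{Lip}^u$-norms of the iterates $\Lp^n h_0$ stay uniformly bounded using {\bf (P4)}(b) and Lemma~\ref{lem:jacobian}(3); the limit then inherits Lipschitz densities on unstable leaves from this uniform bound, not from a mapping property of $\Lp$ on all of $\B$.

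Second, in the spectral-gap step you write $|\Lp^n g| \leq \Lp^n |g|$ and $|g| \leq c\, \tnusrb$ for a peripheral eigenfunction $g \in \B$. A priori $g$ is only a distribution, so $|g|$ is not defined. The regularity bootstrap must come \emph{before} the positivity/comparison argument: one first shows that eigenvectors of $\Lp$ at eigenvalues of modulus $1$ are finite measures with densities (again via approximation by $\Lp^n$ of smooth data, or via the strong-norm bounds and the structure of the spectral projector), and only then compares $|g|$ with $\tnusrb$ and invokes g.c.d.$\{R\}=1$ to kill the non-trivial peripheral eigenvalues and rule out Jordan blocks at $1$. These are exactly the places in the proof of \cite[Theorems 1, 2]{demers norms} where the genuinely dynamical work is done, and your sketch currently assumes them rather than establishing them.
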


%%%%%%%%%%%%%%%%%%%%%%%%%%%%%%%%%%%%%

\subsection{Proof of Theorem~\ref{thm:continuity}}
\label{proof of thm continuity}

Our strategy is to prove that the spectral gap for the transfer operator persists
after the introduction of a small hole.  We adopt the perturbative approach
presented in \cite{keller liverani}.

Recall the definition of a Young tower respecting a hole $H$
from Section~\ref{results}.A (i.e.\ conditions {\bf (H.1)} and {\bf (H.2)}).
Define $\mu^u(\Lambda):= \inf_{\gamma \in
\Gamma^u} \mu_\gamma(\Lambda \cap \gamma)$.
For a
generalized horseshoe $(\Lambda, R)$ respecting a hole
$H$ or a pair of holes $H_1, H_2$, we define
\[
\begin{split}
& n(\Lambda, R; H)= \sup \{n \in {\mathbb Z}^+: \
{\rm no \ point \ in} \ \Lambda \ {\rm falls \ into} \ H \ {\rm in \
the\ first} \ n \ \rm iterates\} \ \mbox{and}    \\
& n(\Lambda, R; H_1, H_2) = \sup \{ n \in \mathbb{Z}^+ :
\mbox{no point in $\Lambda$ falls into $H_1 \triangle H_2$ in the
first $n$ iterates} \}
\end{split}
\]
where $H_1 \triangle H_2$ represents the symmetric difference between
$H_1$ and $H_2$.

Let $\{ H_t \}_{t \in I}$ be a well-parameterized sequence of holes as defined in
Section~\ref{results}.A.
For $\ve>0$, we define $\mathbb{H}_\ve = \{ H_t : t \leq \ve \}$.
The following uniform constants property is assumed for the tower construction.

\begin{assumption}
Let $\{ H_t \}_{t \in I}$ be as above.
There exist constants $C_2, \kappa>0$ and $\theta \in (0,1)$
such that for all small enough $\ve>0$, we have
the following:
\begin{itemize}
\item[(a)] For each pair $\sigma = (H_1, H_2) \in \mathbb{H}_\ve \times \mathbb{H}_\ve$,
\begin{itemize}
\vspace{-6 pt}
\item[(i)] $f$ admits a generalized horseshoe $(\Lambda^{(\sigma)},
R^{(\sigma)})$ respecting both $H_1$ and $H_2$, i.e., the combined
boundaries $\partial H_1 \cup \partial H_2$;
\item[(ii)] $(\Lambda^{(\sigma)}, R^{(\sigma)})$ is mixing.
\end{itemize}

\item[(b)] Each generalized horseshoe $(\Lambda^{(\sigma)},
R^{(\sigma)})$ from above can be constructed to have the following uniform properties:
\begin{itemize}
\vspace{-6 pt}
\item[(i)] $\Lambda^{(\sigma_1)} \approx \Lambda^{(\sigma_2)}$,\footnote{By
$\Lambda^{(\sigma_1)} \approx \Lambda^{(\sigma_2)}$,
we only wish to convey that both
horseshoes are located in roughly the same region of the
manifold $M$ and not anything technical in the sense of
convergence.} for all $\sigma_1, \sigma_2 \in \mathbb{H}_\ve \times \mathbb{H}_\ve$

\item[(ii)] $\mu^u(\Lambda^{(\sigma)}) \ge \kappa$ and
$\mu_\gamma\{R^{(\sigma)}>n\} < C_2 \theta^n$ for all $n \ge 0$;

\item[(iii)] {\bf (P3)--(P5)} hold with the constants $C_0$ and $\alpha$.
\end{itemize}
\end{itemize}
\end{assumption}

\begin{remark}
The uniformity condition {\bf (U)} has been proved for the billiard map associated
with the periodic Lorentz Gas with small holes (see \cite[Proposition 2.2 and Section 3.1]{dwy}).
For the purposes of verifying the uniformity condition, an essential feature of the sequences
of holes considered there is that the boundaries of the holes are transverse to the stable and
unstable manifolds.
Although it may at first seem a strong requirement, checking the
items listed in {\bf (U)} requires only a small modification of the tower construction
necessary to build a tower for a single hole.
\end{remark}

For the remainder of the proof, we fix $\ve >0$ and assume that
$\mathbb{H}_\ve$ satisfies the uniformity conditions {\bf (U)}.
Given $H_1, H_2 \in \mathbb{H}_\ve$,
{\bf (U)}(a)(i) immediately yields a tower $(F, \Delta)$
in which $\tH_1 = \pi^{-1}H_1$ and $\tH_2 = \pi^{-1}H_2$ are both countable
unions of Markov partition elements $\dlj$.  There are 3 towers we wish to compare:
$(F, \Delta)$ which has cuts respecting both $\tH_1$ and $\tH_2$, but no holes
have been removed; $(\F_1, \Delta(\tH_1))$, the open system corresponding to
the removal of $\tH_1$ from $\Delta$; and $(\F_2, \Delta(\tH_2))$, the open system
corresponding to $\tH_2$.
Note that $\F_1$ and $\F_2$ are both restrictions of the same map $F$.

We denote the transfer operators associated
with these systems by $\Lp$, $\Lp_1$ and $\Lp_2$ respectively.
Since $\Delta(\tH_1), \Delta(\tH_2) \subset \Delta$, we may consider
all three operators acting on a single Banach space $(\B, \| \cdot \|)$ as defined in
Section~\ref{norms}.

We let $\Delta^n(\tH_i)$ denote the set of points in $\Delta$ which have not escaped
from the tower with hole $\tH_i$ by time $n$.
Notice that for $h \in \mbox{Lip}^u(\Delta)$,
\begin{equation}
\label{eq:hole op}
\Lp_i h = 1_{\Delta \backslash \tH_i} \Lp ( 1_{\Delta \backslash \tH_i} h )
= \Lp (1_{\Delta^1(\tH_i)} h) \; \; \mbox{for $i = 1,2$.}
\end{equation}
Since $1_{\tH_i} \in \B$, it follows that $\Lp_1$ and $\Lp_2$ satisfy the
inequalities \eqref{eq:weak norm est}-\eqref{eq:unstable norm est} from
Proposition~\ref{prop:ly} with uniform constants
$\bar{C} >0$, $\beta<1$.

In order to carry out the perturbation argument in \cite{keller liverani}, we introduce
the following norm for operators $\mathcal{K} : \B \to \B_w$:
\[
||| \mathcal{K} ||| = \sup \{ |\mathcal{K} h |_w : \| h \| \leq 1 \} .
\]

\begin{lemma}
\label{lem:close L}
There exists $C>0$ such that for any $H_1, H_2 \in \mathbb{H}_\ve$,
\begin{eqnarray}
||| \Lp - \Lp_i ||| & \leq & C (\beta^{-1} \beta_0)^{n(\Lambda, R; H_i)} \; \mbox{for $i =1,2$, and}
	\label{eq:1 hole close} \\
||| \Lp_1 - \Lp_2 ||| & \leq & C (\beta^{-1} \beta_0)^{n(\Lambda, R; H_1, H_2)}.
	\label{eq:2 hole close}
\end{eqnarray}
\end{lemma}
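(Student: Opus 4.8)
\textbf{Proof proposal for Lemma~\ref{lem:close L}.}
The plan is to estimate $|||\Lp - \Lp_i|||$ and $|||\Lp_1 - \Lp_2|||$ by first showing that the relevant operator differences are transfer operators supported on pieces of $\Delta$ that sit in \emph{high} levels of the tower, and then exploiting the $\beta_0^\ell$ (and $\beta^\ell$) weights built into the weak and strong norms to convert ``high level'' into ``small quantity.'' Using the identity~\eqref{eq:hole op}, for $h \in \mbox{Lip}^u(\Delta)$ we have
\[
(\Lp - \Lp_i) h = \Lp\bigl( 1_{\tH_i^1} h \bigr), \qquad \tH_i^1 := \Delta \setminus \Delta^1(\tH_i),
\]
and similarly $(\Lp_1 - \Lp_2) h = \Lp\bigl( (1_{\Delta^1(\tH_1)} - 1_{\Delta^1(\tH_2)}) h \bigr)$; the symmetric difference $\Delta^1(\tH_1) \triangle \Delta^1(\tH_2)$ is contained in $\tH_1 \triangle \tH_2$ together with one $F$-preimage of it. The first step is the geometric observation: because $(\Lambda, R)$ respects the hole(s), every element $\dlj$ that meets $\tH_i$ (resp.\ $\tH_1 \triangle \tH_2$) must lie on a level $\ell \geq n(\Lambda, R; H_i)$ (resp.\ $\ell \geq n(\Lambda, R; H_1, H_2)$): a point of $\Lambda$ whose orbit enters the hole before time $n(\Lambda,R;H_i)$ would contradict the definition of $n(\Lambda,R;H_i)$, and by condition {\bf (H.1)} the lifted hole is a union of partition elements, so the ``cut'' occurs cleanly at a level at least this high. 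The same applies to $\tH_1 \triangle \tH_2$ with $n(\Lambda,R;H_1,H_2)$.

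The second step is to bound $|\Lp(1_A h)|_w$ when $A$ is an $s$-subset supported on levels $\ell_0 := n(\Lambda,R;H_i)$ or higher, in terms of $\|h\|$. Since $\Lp$ satisfies the weak-norm estimate~\eqref{eq:weak norm est} with uniform constant $\bar C$, it suffices to bound $|1_A h|_w$ by $C(\beta^{-1}\beta_0)^{\ell_0}\|h\|$. Unpacking the definition~\eqref{eq:weak}, $|1_A h|_w = \sup_{\ell,j,k}\beta_0^\ell\sup_{E,\vf}\fint_E 1_A h\,\vf\,dm$, and only cylinders $E$ at levels $\ell \geq \ell_0$ contribute. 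On such a cylinder, I would write $\fint_E h\,\vf\,dm$ in terms of the strong stable norm~\eqref{eq:s-stable}: since $|\vf|_{\C^q_s(E)} \leq |\vf|_{\C^p_s(E)} \leq 1$ (as $q < p$ and $d_s \leq \mbox{diam}$), we get $\bigl|\fint_E h\,\vf\,dm\bigr| \leq \beta^{-\ell}\|h\|_s \leq \beta^{-\ell}\|h\|$. Therefore
\[
\beta_0^\ell\Bigl|\fint_E h\,\vf\,dm\Bigr| \leq \beta_0^\ell\beta^{-\ell}\|h\| = (\beta^{-1}\beta_0)^\ell\|h\| \leq (\beta^{-1}\beta_0)^{\ell_0}\|h\|,
\]
the last step using $\beta^{-1}\beta_0 < 1$ — which holds by the choice $\beta \geq \beta_0^{(p-q)/p} > \beta_0$ made in Section~\ref{norms} — so that $(\beta^{-1}\beta_0)^\ell$ is decreasing in $\ell$. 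Taking the supremum over $\ell \geq \ell_0$, $j$, $k$, $E$, $\vf$ and then applying~\eqref{eq:weak norm est} gives~\eqref{eq:1 hole close} with $C = \bar C$; the estimate~\eqref{eq:2 hole close} follows identically, noting that passing to the extra $F$-preimage of $\tH_1\triangle\tH_2$ only lowers the level by one and so costs at most a factor $\beta^{-1}\beta_0^{-1}$, which can be absorbed into $C$.

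The main obstacle I anticipate is the bookkeeping in the second step: one must be careful that the cylinders $E \in \pa^k_{\ell,j}$ appearing in the weak norm of $1_A h$ really do all lie at levels $\geq \ell_0$, which requires that $A$ itself be a union of partition elements (so that $1_A$ does not ``smear'' mass onto lower levels) — this is exactly what {\bf (H.1)} guarantees, and is why the lemma is stated for towers respecting the hole(s). A secondary point requiring care is that $1_A h$ need not lie in $\mbox{Lip}^u(\Delta)$ even if $h$ does, since multiplying by an indicator can destroy unstable-leaf regularity; however, this is harmless because the weak norm only tests against functions smooth along \emph{stable} leaves, and $1_A$ \emph{is} constant along stable leaves (as $A$ is an $s$-subset), so $|1_A h|_w$ makes sense and the estimate above is valid by density. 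Once these points are handled the rest is the short computation indicated above.
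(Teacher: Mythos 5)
Your proposal is correct and arrives at the same bounds as the paper, but it takes a genuinely different (and more compact) route. The paper estimates $\fint_E (\Lp - \Lp_1)h\,\vf\,dm$ directly, splitting into cases according to the level $\ell$ of the test cylinder $E$: for $\ell=0$ it decomposes $F^{-1}E$ into $(k{+}1)$-cylinders, invokes bounded distortion and the exponential tail $m(E'_1)\lesssim\theta^{\ell'}$ to control the sum over high levels $\ell'\ge n(\Lambda,R;H_1)$; for $1\le\ell\le n(\Lambda,R;H_1)$ the difference vanishes; and for $\ell>n(\Lambda,R;H_1)$ there is a single preimage cylinder and the strong stable norm gives the bound directly. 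You instead factor $(\Lp-\Lp_i)h=\Lp(1_Ah)$, observe $A$ lives on levels $\gtrsim n(\Lambda,R;H_i)$, bound $|1_Ah|_w$ by $(\beta^{-1}\beta_0)^{n}\|h\|_s$ using the $\beta_0^\ell$ weight together with the strong-stable control $\fint_E 1_Ah\,\vf\,dm\le\beta^{-\ell}\|h\|_s$, and then pass through the Lasota--Yorke inequality \eqref{eq:weak norm est}. This is cleaner — it avoids the three-way case analysis and the explicit distortion/tail computation — but it buys that brevity by invoking \eqref{eq:weak norm est} as a black box, which already encodes the $\ell=0$ tail estimate.

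The one step that needs more care in your version is precisely that invocation. Proposition~\ref{prop:ly} is stated for $h\in\B$, and $1_Ah$ lies neither in $\Lip^u(\Delta)$ nor obviously in $\B$; your ``by density'' justification addresses only the well-definedness of $|1_Ah|_w$, not whether $\Lp$ is a bounded operator on the class containing $1_Ah$ with the same constant $\bar C$. What is actually needed (and is true, but should be said) is that the weak-norm Lasota--Yorke bound \eqref{eq:weak norm est} holds for any $h\in L^1(m)$ with $|h|_w<\infty$, because its proof is the direct computation $\int_E\Lp h\,\vf\,dm=\int_{F^{-1}E}h\,(\vf\circ F)\,dm$ followed by the cylinder decomposition and the exponential-tails estimate — none of which uses unstable regularity of $h$. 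If you unwind this, you essentially reproduce the paper's case analysis, which is why the paper chose to do the direct computation rather than appeal to Proposition~\ref{prop:ly}. Two small corrections: the extra $F$-preimage lowers the relevant level by one, costing a factor $\beta\beta_0^{-1}$ (not $\beta^{-1}\beta_0^{-1}$, though both are harmless constants); and the comparison $|\vf|_{\C^q_s(E)}\le|\vf|_{\C^p_s(E)}$ uses $q<p$ together with $\operatorname{diam}_{d_s}\le 1$, which holds under the normalization in Section~\ref{norms} but is worth flagging.
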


\begin{proof}
By the density of Lip$^u(\Delta)$ in $\B$ and $\B_w$, it suffices to derive these inequalities for $h \in \Lip^u(\Delta)$.
Now let $h \in \mbox{Lip}^u(\Delta)$, $\|h\| \leq 1$. Fix $E \in \pklj$ and
take $\vf \in \cps(E)$ with $|\vf|_{\cps(E)} \leq 1$.

We first consider the case when
$\ell=0$ and $E \in \pa^k_{0,j}$.
Note that $F^{-1}E$ is comprised
of a countable union of $(k+1)$-cylinders, $F^{-1}E = \cup E'$,
$E' \in \pa^{k+1}_{\ell',j'}$.
On level $\ell' \le n(\Lambda, R; \tH_1) $, we have $1_{\Delta^1(\tH_1)} = 1$.
Also, since $\Delta^1(\tH_1)$ is a union of 1-cylinders, we have either
$1_{\Delta^1(\tH_1)}|_{E'} \equiv 0$ or $1_{\Delta^1(\tH_1)}|_{E'} \equiv 1$
for each $E' \subset F^{-1}E$ when $\ell'  > n(\Lambda, R; \tH_1)$.
Thus by \eqref{eq:hole op},
\begin{equation}
\label{eq:zero}
\int_E (\Lp - \Lp_1) h \, \vf \, dm
= \sum_{E'} \int_{E'}(1-1_{\Delta^1(\tH_1)})h \, \vf\circ T \, dm
\leq \sum_{\ell' \geq n(\Lambda, R;H_1) } \! \! \! \beta^{-\ell'} m(E') \|h\|_s |\vf\circ F^n|_{\cqs(E')}
\end{equation}
where $E' \subseteq \Delta_{\ell',j'}$.
To estimate $|\vf\circ F^n|_{\cqs(E')}$,
take $x,y \in \gamma^s \subset E'$ and write
\[
|\vf \circ F^n(x) - \vf \circ F^n(y)| \; \leq \; K^q_s(\vf) d_s(F^nx, F^ny)^q
\; \leq \; K^q_s(\vf) C_0 \alpha^{qn} d_s(x,y)^q
\]
by \eqref{eq:ds contract} since $q< p$.
This together with $|\vf \circ F^n|_\infty = |\vf|_\infty$ implies
$|\vf\circ F^n|_{\cqs(E')} \leq C_0 |\vf|_{\cqs(E)} $.

Due to bounded distortion given by Property {\bf (P4)}(b) and Lemma~\ref{lem:jacobian},
we have
$\frac{m(E')}{m(E)}
\leq C_1 \frac{m(E'_1)}{m(\Delta_0)}$
where $E'_1$ is the $1$-cylinder containing $E'$.  Thus by
{\bf (U)}(b)(ii),
there exists $C>0$ such that,
\begin{equation}
\label{eq:distortion}
m(E') \leq C m(E'_1) m(E).
\end{equation}

Now \eqref{eq:zero} becomes,
\[
\int_E (\Lp - \Lp_1) h \, \vf \, dm
\leq CC_0 \sum_{\ell' \geq n(\Lambda, R;H_1) } \beta^{-\ell'} m(E'_1) m(E) \|h\|
\leq C' (\beta^{-1} \theta)^{n(\Lambda, R; H_1)} m(E) \|h\|
\]
since $\theta < \beta$.  Dividing by $m(E)$ and
taking the supremum over $\vf \in \cps(E)$ and $E\in \pa^k_{0,j}$, we have
\begin{equation}
\label{eq:weak zero}
|(\Lp - \Lp_1) h|_{w(\pa^k_{0,j})} \leq C' (\beta^{-1} \theta)^{n(\Lambda, R; H_1)} \|h\| .
\end{equation}

Next consider $1 \leq \ell \le n(\Lambda, R; H_1)$.
Then \eqref{eq:hole op} implies that $(\Lp h)|_{\Delta_\ell} = (\Lp_1 h) |_{\Delta_\ell}$,
so
\[
\int_E (\Lp - \Lp_1)h \, \vf \, dm = 0 .
\]

Finally, we consider the case $\ell > n(\Lambda, R; H_1)$, then
\[
\int_E (\Lp - \Lp_1)h \, \vf \, dm  =  \int_{F^{-1}E} (1 - 1_{\Delta^1(\tH_1)}) h \,
\vf \circ T \, dm
 \le \| h\|_s \beta^{-\ell+1}m(E) |\vf|_{\cqs(E)} ,
\]
since $m(F^{-1}E) = m(E)$.
Taking the appropriate suprema in the definition of the weak norm,
we obtain for $\ell \geq 1$,
\begin{equation}
\label{eq:one}
|(\Lp - \Lp_1) h|_{w(\pklj)} \leq (\beta^{-1}\beta_0)^{n(\Lambda,R;H_1)} \|h\| .
\end{equation}
Combining \eqref{eq:weak zero} and \eqref{eq:one} proves
\eqref{eq:1 hole close} for $i=1$ since $\theta < \beta_0 < \beta$.
The proof for $i=2$ is identical.

To prove \eqref{eq:2 hole close}, replace $\Lp$ by $\Lp_2$ and
the analogous estimates follow using the fact that
$1_{\Delta^1(\tH_1)} = 1_{\Delta^1(\tH_2)}$ on all levels
$\ell \le n(\Lambda, R; H_1, H_2) $.
\end{proof}

Let $H_1 = H_{t_1}$, $H_2=H_{t_2} \in \mathbb{H}_\ve$
(we allow the possibility that one of the holes is the infinitesimal
hole $H_0$).

By definition, dist$(H_1, H_2) \leq |t_1 - t_2|$.  By condition {\bf (H.2)},
$d(f^\ell \Lambda, \Si_{H_1} \cup \Si_{H_2}) \geq \delta \xi_1^{-\ell}$ so that
$n(\Lambda, R; H_1, H_2) \geq - \log(|t_1 - t_2|/\delta)/\log \xi_1$ for $|t_1 - t_2| < \delta$.
Lemma~\ref{lem:close L} implies that
\[
\begin{split}
||| \Lp - \Lp_i ||| & \leq C \delta^{-1} |t_i|^{\log(\beta_0^{-1} \beta)/\log \xi_1}
\; \mbox{for $i =1,2$, and}  \\
||| \Lp_1 - \Lp_2 ||| & \leq  C \delta^{-1} |t_1 - t_2|^{\log(\beta_0^{-1} \beta)/\log \xi_1}.
\end{split}
\]
Now the results of \cite{keller liverani} imply that both the spectra and spectral projectors outside any
disk of radius greater than $\beta$ vary H\"older continuously in the size of the
perturbation.  Since the original dynamics are mixing
by {\bf (U)}(a)(ii), $\Lp$ has a spectral gap
by Theorem~\ref{thm:spectral}.
Thus there exists $\delta >0$ such that the spectral gap for $\Lp$ is preserved for $\Lp_i$ for
$t_i \le \delta$.

Let $\tmu_i$ denote the physical quasi-invariant measure in $\B$ corresponding to the
leading eigenvalue $\ra_i$ of $\Lp_i$, $i=1,2$.  Then \cite[Theorem 4.4]{dwy}
implies that the escape rate from $\Delta(\tH_i)$ with respect to
$\tmu_i$, $-\rho(\tmu_i)$, exists and equals $-\log \ra_i$.  By
\cite[Theorem 2]{dwy}, the escape rate $-\rho_i(\nusrb)$ from
$M\backslash H_i$ with respect to $\nusrb$ equals $-\log \ra_i$ as well.

In particular, we have
$|\ra_1 - \ra_2| \leq C' |t_1 - t_2|^{\bar \alpha}$ for any
$\bar \alpha < \log(\beta_0^{-1} \beta)/\log \xi_1$.
This implies that $t \mapsto \rho(t)$ is a H\"older continuous function
for $t \leq \delta$.
In addition, \cite{keller liverani} also implies that the spectral projectors vary
H\"older continuously so that the quasi-invariant measures
$\tmu_t$ vary H\"older continuously in the weak norm $| \cdot |_w$ for $t \leq \delta$.
Projecting these quasi-invariant measures to $M$, we obtain measures
$\mu_t = \pi_*\tmu_t$, which are quasi-invariant with respect to $f|_{M\backslash H_t}$.
It follows from \cite[Lemmas 2.2 and 4.1]{demers norms} that the weak norm dominates the integral, i.e.,
\[
|\tmu_t(\vf) | \le C |\tmu_t|_w |\vf|_{L^\infty}
\]
for all $\vf \in \mathcal{F}_b$ which are continuous on each element $\dlj$.
Since a bounded continuous function $\vf$ on $M$
lifts to a bounded function $\vf \circ \pi$ on $\Delta$ that is continuous on each $\dlj$,
the projected measures $\mu_t$
also vary H\"older continuously with $t$:  Given $\vf \in C^0(M)$,
we have $|\mu_{t_1}(\vf) - \mu_{t_2}(\vf) | \leq C''|t_1 - t_2|^{\bar \alpha} |\vf|_{C^0(M)}$ for
$t_1, t_2 \le \delta$.

%\small

\end{document}